\title[Spectre et topologie des vari\'et\'es hyperboliques]{Quelques cons\'equences des travaux d'Arthur pour le spectre et la topologie des
vari\'et\'es hyperboliques}
\author{Nicolas Bergeron et Laurent Clozel} 
\address{Institut de Math\'ematiques de Jussieu \\
Unit\'e Mixte de Recherche 7586 du CNRS \\
Universit\'e Pierre et Marie Curie \\
4, place Jussieu 75252 Paris Cedex 05, France \\}
\email{bergeron@math.jussieu.fr}
\urladdr{http://people.math.jussieu.fr/~bergeron}
\address{Universit\'e Paris Sud \\
Unit\'e Mixte de Recherche 8628 du CNRS \\
Laboratoire de Math\'ematiques \\
B\^at. 425, 91405 Orsay cedex, France\\}
\email{Laurent.Clozel@math.u-psud.fr}
 \DeclareFontFamily{OT1}{rsfs}{}
\DeclareFontShape{OT1}{rsfs}{n}{it}{<-> rsfs10}{}
\DeclareMathAlphabet{\mathscr}{OT1}{rsfs}{n}{it}
\renewcommand{\H}{\mathbf{H}}
\DeclareFontFamily{OT1}{rsfs}{}
\DeclareFontShape{OT1}{rsfs}{n}{it}{<-> rsfs10}{}
\DeclareMathAlphabet{\mathscr}{OT1}{rsfs}{n}{it}
\newcommand{\SO}{\mathrm{SO}}
\newtheorem{thm}[subsection]{Th\'eor\`eme}  
\newtheorem*{lem*}{Lemme}         
\newtheorem{prop}[subsection]{Proposition}
\newtheorem*{prop*}{Proposition}
\newtheorem{conj}[subsection]{Conjecture}
\newtheorem{cor}[subsection]{Corollaire}
\theoremstyle{definition}
\numberwithin{equation}{subsection}
\renewcommand{\H}{\mathbf H}  
\newcommand{\cal}{\mathcal}
\newcommand{\GL}{\mathrm{GL}}
\newcommand{\SL}{\mathrm{SL}}
\newcommand{\SU}{\mathrm{SU}}
\newcommand{\Sp}{\mathrm{Sp}}
\newcommand{\RR}{\mathbf R}
\newcommand{\CC}{\mathbf C}
\newcommand{\ZZ}{\mathbf Z}
\newcommand{\QQ}{\mathbf Q}
\def\adots{\mathinner{\mkern2mu\raise1pt\hbox{.}
\mkern3mu\raise4pt\hbox{.}\mkern1mu\raise7pt\hbox{.}}}
\begin{document}

\begin{abstract}  
En nous basant sur les r\'esultats d'Arthur annonc\'es dans \cite[\S 30]{Arthur} 
nous d\'emontrons
les conjectures \'enonc\'ees dans \cite{IMRN,BC,SMF} dans le cas des groupes orthogonaux \`a l'exclusion des groupes de type ${}^6 \- D_4$. En ce qui concerne ces derniers, nous annon\c{c}ons 
la d\'emonstration -- encore en pr\'eparation~-- que leur r\'eseaux de congruences ont toujours
un $H^1$ trivial. Les d\'emonstrations d'Arthur devraient para\^{\i}tre prochainement.
\end{abstract}
\maketitle
\tableofcontents

%

\section{Introduction}

Soit $G$ le $\QQ$-groupe semi-simple obtenu -- par restriction des scalaires -- \`a partir d'un groupe 
sp\'ecial orthogonal sur un corps de nombres totalement r\'eel. On consid\`ere pour l'instant 
un groupe $G$ qui ne provient pas d'une forme tordue ${}^{3,6} \- D_4$ -- on dit alors que $G$
est {\it non trialitaire}. On
suppose enfin $G (\RR)$ \'egal au produit de $\SO (n,1)$ par un groupe compact. 
L'espace sym\'etrique associ\'e au groupe $G$ est alors l'espace hyperbolique r\'eel 
$\H^n$ que l'on munit de sa m\'etrique de courbure sectionnelle constante \'egale \`a $-1$. 
\'Etant donn\'e un sous-groupe de congruence (sans torsion) $\Gamma \subset G$ on peut
former le quotient $\Gamma \backslash \H^n$. Ce quotient 
est une vari\'et\'e hyperbolique r\'eelle de volume fini; appelons {\it vari\'et\'es hyperboliques de congruence} les vari\'et\'es ainsi obtenues. Le cas \'ech\'eant nous parlons de 
vari\'et\'es hyperboliques de congruence {\it non trialitaires} afin de rappeler que le groupe 
$G$ est suppos\'e non trialitaire.

Burger, Li et Sarnak \cite{BLS} ont propos\'e la conjecture suivante -- dite {\it de puret\'e} ou 
{\it de quantification} -- d\'ecrivant le spectre du laplacien sur les vari\'et\'es hyperboliques de congruence. 

\begin{conj} \label{CBS}
Soit $M$ une vari\'et\'e hyperbolique de congruence de dimension $n$. Le spectre $L^2$
du laplacien (sur les fonctions) de $M$ est contenu dans l'ensemble
$$\bigcup_{0 \leq j < \frac{n-1}{2}} \left\{ \left( \frac{n-1}{2} \right)^2 - \left(  \frac{n-1}{2} - j \right)^2 \right\}
\cup \left[ \left( \frac{n-1}{2} \right)^2 , + \infty \right[ .$$
\end{conj}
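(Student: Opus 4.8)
The plan is to transport the $L^2$-spectrum of $\Delta$ on $M=\Gamma\backslash\H^n$ to the automorphic spectrum of $G(\Ade)$ and then to apply Arthur's classification of the discrete automorphic spectrum of orthogonal groups. Put $\rho=\frac{n-1}{2}$. After fixing a level $K_f\subset G(\Ade_f)$ with $\Gamma=G(\Q)\cap K_f$ and a maximal compact $K_\infty\subset G(\R)$, the spectral decomposition of $L^2(G(\Q)\backslash G(\Ade))$ identifies the $\Delta$-eigenfunctions on $L^2(M)$ with the $K_\infty K_f$-fixed vectors in the automorphic representations $\pi=\pi_\infty\otimes\pi_f$ occurring in it. The Eisenstein contribution, attached to the (essentially unique) proper parabolic, accounts exactly for $[\rho^2,+\infty[$ by the usual analytic-continuation argument, and is empty when $M$ is compact; so it suffices to treat $\pi$ in the discrete spectrum. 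For such a $\pi$ the component $\pi_v$ is the trivial representation at the Archimedean places $v$ where the corresponding factor of $G(\R)$ is compact, while at the place $v_0$ where that factor is $\SO(n,1)$ the component $\pi_{v_0}$ is a spherical representation: either tempered, contributing to $[\rho^2,+\infty[$, or the spherical Langlands quotient $J(\nu)$ of the spherical principal series with parameter $\nu\in{}]0,\rho]$, of Laplace eigenvalue $\rho^2-\nu^2$. Everything thus reduces to the claim: \emph{if $J(\nu)$ is the $v_0$-component of a representation in the discrete spectrum of $G$, then $\nu=\rho-j$ for an integer $j$ with $0\le j<\rho$} (the case $j=0$, $\nu=\rho$, being the trivial representation, $\lambda=0$).

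To prove this I would feed $\pi$ into Arthur's description of $L^2_{\mathrm{disc}}$ for (inner forms of quasi-split) orthogonal groups, which comes from the twisted trace formula and the comparison with $\GL_N$, $N$ being the dimension of the standard representation of $\hatG$ (so $N=n$ or $n+1$ according to parity). This attaches to $\pi$ a global Arthur parameter $\psi=\boxplus_i\,\Pi_i[d_i]$, the $\Pi_i$ being self-dual cuspidal automorphic representations of $\GL_{n_i}$ over the totally real base field, of the type (orthogonal or symplectic) prescribed by $\hatG$ and with $\sum_i n_i d_i=N$, together with the information that $\pi$ lies in the restricted tensor product of the local Arthur packets $\Pi_{\psi_v}$ and satisfies Arthur's multiplicity (sign) condition.

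Next comes the Archimedean infinitesimal-character bookkeeping. The infinitesimal character of $J(\nu)$ is the multiset $\{\pm\nu\}\cup\Lambda$, where $\Lambda$ is the fixed, explicit, half-integral infinitesimal character of the trivial representation of the compact Levi $\SO(n-1)$ of the minimal parabolic of $\SO(n,1)$, while that of $\phi_{\psi_{v_0}}$ is the union over $i$ of the $[d_i]$-strings $\{\,z+\tfrac{d_i-1}{2}-m:\ z\in\mathrm{Exp}(\phi_{\Pi_{i,v_0}}),\ 0\le m<d_i\,\}$. Since $\SO(n,1)$ has real rank one there is a single ``non-tempered direction'', and comparing the two multisets shows that $\nu$ must be the top exponent of a single block. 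In the generic case that block is a character block $\chi[d]$ -- a ``$\rho$-Speh'' -- so $\chi$ is a quadratic Hecke character, its component at $v_0$ is trivial by sphericity, $\nu=\tfrac{d-1}{2}$; matching the leftover string $\{\tfrac{d-1}{2}-1,\dots,-\tfrac{d-1}{2}\}$ against $\Lambda$ forces $d\equiv n\ (\mathrm{mod}\ 2)$, hence $d=n-2j$ and $\nu=\rho-j$, while unitarity of $J(\nu)$ (the complementary-series range) gives $0<\nu\le\rho$, i.e. $0\le j<\rho$; the eigenvalue is then $\rho^2-(\rho-j)^2$, as asserted. The tempered $\pi_{v_0}$ contribute $[\rho^2,+\infty[$, and the residual (CAP) representations realising each $\rho-j$ show the set is optimal, although here only the inclusion is needed.

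The hard part will be to exclude the remaining configurations: those in which $\nu$ is supplied not by the Arthur $SL_2$ but by a constituent $\Pi_i$ of $\GL_{n_i}$ with $n_i\ge 2$ that is itself non-tempered at $v_0$, violating Ramanujan there, and could a priori produce a non-half-integral $\nu$. For this I would combine (i) the rigidity imposed by self-duality on $\Pi_i$ -- a two-dimensional self-dual cuspidal representation is either dihedral, hence tempered at all Archimedean places, or symplectic, and a symplectic block of rank $\ge 2$ non-tempered at $v_0$ would contribute strictly more non-half-integral exponents than the lone pair $\{\pm\nu\}$ allowed by the infinitesimal character of a spherical representation of $\SO(n,1)$; with (ii) the constraint from the compact Archimedean places, when present: triviality of $\pi_v$ there forces each $\Pi_i$ to be regular $C$-algebraic, which by Clozel's purity lemma propagates to $v_0$ and so forces the block carrying $\nu$ to be tempered at $v_0$ as well. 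Making this step work uniformly -- in particular without invoking any Ramanujan-type hypothesis -- is the real obstacle, and it is precisely here that the explicit (and, in real rank one, elementary) shape of Arthur's Archimedean packets and of his multiplicity formula must be used.
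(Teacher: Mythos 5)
The paper does not prove it, and cannot with the tools at hand: Theorem~1.2 (the result immediately following it) only proves the weaker inclusion in which the interval $\left[\rho^2,+\infty\right[$ is replaced by $\left[\rho^2 - \bigl(\tfrac12-\tfrac1{(n+1)^2+1}\bigr)^2,+\infty\right[$, and Proposition~\ref{PA} makes completely explicit why: after the reduction to generalized Arthur parameters, the one configuration that cannot be excluded is a block $\chi\otimes 1$ with $n_j=1$ and $\chi=(z\overline z)^s$, $s\notin\tfrac12\ZZ$, and for that block one can only invoke the Luo--Rudnick--Sarnak bound $|\mathrm{Re}(s)|<\tfrac12-\tfrac1{m_j^2+1}$. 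Eliminating it would amount to proving the generalized Ramanujan conjecture for the corresponding $\GL(m_j)$.

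Your reduction --- transport to $L^2(G(\Q)\backslash G(\Ade))$, classify $\pi_{v_0}$ as tempered or a spherical Langlands quotient $J(\nu)$, feed $\pi$ into Arthur's classification, and do the infinitesimal-character bookkeeping --- is exactly the route the paper takes (Theorem~\ref{C} and Proposition~\ref{PA}). The genuine gap is your claim that the ``remaining configurations'' can be excluded by rigidity considerations. Your argument~(i) is simply false as stated: a self-dual cuspidal $\Pi_i$ of $\GL(2)$ that is symplectic and non-tempered at $v_0$ (a putative Maass form violating Ramanujan) has local parameter $(z\overline z)^s\oplus(z\overline z)^{-s}$ at $v_0$, contributing \emph{exactly} the lone pair $\{\pm s\}$ of non-half-integral exponents, not ``strictly more''; and it fits perfectly into the infinitesimal character of a spherical complementary-series $J(\nu)$ of $\SO(n,1)$ with $\nu=s$. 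This is precisely the case the paper cannot rule out. Your argument~(ii) does not save it: when the totally real base field is $\QQ$ there are no compact archimedean places, and even over a larger field, triviality of $\pi_v$ at compact places gives no information about temperedness of $\Pi_i$ at $v_0$ (regular $C$-algebraicity does not imply temperedness; what one would need is a cohomological condition \emph{at $v_0$}, which is exactly what a spherical complementary series fails to provide). The ``real obstacle'' you identify in your last sentence is, in fact, equivalent to Ramanujan for the relevant $\GL(m_j)$, and it is not overcome by the shape of Arthur's archimedean packets or by his multiplicity formula. What the paper actually establishes (and what you should have aimed for) is Theorem~\ref{Tprinc} / Proposition~\ref{PA}, the best unconditional statement the present knowledge of Ramanujan allows.
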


Un cas particulier -- le cas du groupe $\SO (3)$ d\'eploy\'e $/ \QQ$ -- de cette conjecture est la c\'el\`ebre conjecture de Selberg. Dans \cite[\S 6.2 \& 6.3]{BC} nous avons ramen\'e cette conjecture aux conjectures d'Arthur telles que formul\'ees dans \cite{ArthurOU} et propos\'e des extensions au spectre
du laplacien sur les formes diff\'erentielles. Depuis, le programme d'Arthur a fait de spectaculaires 
progr\`es, d'une part gr\^ace \`a Arthur lui-m\^eme -- voir \cite[\S 30]{Arthur} -- et d'autre part gr\^ace \`a la
d\'emonstration du lemme fondamental -- par Ng\^o \cite{Ngo} -- puis de ses diff\'erents avatars (tordus et
pond\'er\'es) par Ng\^o, Waldspurger, Laumon et Chaudouard.  \`A la suite de quoi les r\'esultats contenus dans \cite[\S 30]{Arthur} peuvent maintenant \^etre rendus inconditionnels. C'est la raison d'\^etre de cet article.
Nous commen\c{c}ons par d\'eduire de ces bouleversements r\'ecents le th\'eor\`eme \ref{Tprinc} qui implique en particulier l'approximation suivante de la conjecture \ref{CBS}~:

\begin{thm} 
Soit $M$ une vari\'et\'e hyperbolique de congruence non trialitaire de dimension $n$.  Le spectre $L^2$
du laplacien (sur les fonctions) de $M$ est contenu dans l'ensemble
$$\bigcup_{0 \leq j < \frac{n-1}{2}} \left\{ \left( \frac{n-1}{2} \right)^2 - \left(  \frac{n-1}{2} - j \right)^2 \right\}
\cup \left[ \left( \frac{n-1}{2} \right)^2 - \left( \frac12 - \frac{1}{(n+1)^2+1} \right)^2 , + \infty \right[ .$$
\end{thm}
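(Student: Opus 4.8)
The plan is to deduce this spectral bound from Theorem \ref{Tprinc} (the general automorphic statement whose existence is announced in the excerpt), which packages the consequences of Arthur's classification for orthogonal groups. First I would recall that an $L^2$-eigenvalue $\lambda$ of the Laplacian on $M = \Gamma\backslash\H^n$ that does \emph{not} lie in the discrete part $\bigcup_{0\le j<\frac{n-1}{2}}\{(\frac{n-1}{2})^2-(\frac{n-1}{2}-j)^2\}$ must come from a cuspidal (or more generally residual-but-non-trivial-type) automorphic representation $\pi$ of $G(\A)$ whose archimedean component $\pi_\infty$ is a nontempered but non-``cohomological-endpoint'' representation of $\SO(n,1)$ — equivalently a complementary-series representation with spectral parameter $s\in(0,\frac12)$, contributing eigenvalue $(\frac{n-1}{2})^2-s^2$. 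The issue is therefore entirely about bounding how close to the temperedness line ($s=0$) such a $\pi_\infty$ can be, i.e. a bound towards the Ramanujan conjecture at the archimedean place.

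Next I would invoke Arthur's description of the discrete automorphic spectrum of $G$ — a quasi-split (or inner form of quasi-split) special orthogonal group — in terms of formal sums of cuspidal self-dual automorphic representations of $\GL_{m}/E$ (with $E$ the totally real base field), organized into Arthur parameters $\psi = \boxplus_i \pi_i\boxtimes[d_i]$. The non-tempered ``defect'' in $\pi_\infty$ is governed by the $\SL_2$-factors $[d_i]$ in $\psi$, and each constituent $\pi_i$ is a cuspidal self-dual representation of some $\GL_{m_i}/E$ with $m_i \le n+1$ (since the dual group of $\SO_{n+1}$ or $\SO_{n+1}^*$ sits inside $\GL_{n+1}$, hence $\sum m_i d_i \le n+1$). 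The archimedean parameter of such a $\pi_i$ at a real place is then constrained by the \emph{best known bound towards the Ramanujan--Selberg conjecture for $\GL_m$}, namely the Luo--Rudnick--Sarnak / Müller--Speh-type bound, which gives that the real part of any archimedean Satake parameter of a cuspidal representation of $\GL_m$ is at most $\tfrac12-\tfrac1{m^2+1}$. With $m\le n+1$ this yields exactly $\tfrac12-\tfrac1{(n+1)^2+1}$ as the bound on $s$, hence the eigenvalue is at least $(\frac{n-1}{2})^2-(\frac12-\frac1{(n+1)^2+1})^2$.

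Concretely the steps are: (i) separate the $L^2$-spectrum of $M$ into the contribution of one-dimensional (and other ``exceptional-type'', CAP-with-trivial-$\pi_\infty$) automorphic representations, which produce exactly the discrete points listed, using the fact that $M$ is a congruence manifold so strong approximation and the adelic description apply; (ii) for the remaining spectrum, pass to the associated global Arthur parameter $\psi$ via Theorem \ref{Tprinc}, and read off that $\pi_\infty$ is a Langlands quotient built from complementary-series data whose non-tempered parameter $s$ is bounded by the largest archimedean exponent of the $\GL_{m_i}$-constituents $\pi_i$; (iii) bound $m_i\le n+1$ using the embedding ${}^LG\hookrightarrow\GL_{n+1}(\C)$; (iv) apply the Luo--Rudnick--Sarnak bound $\tfrac12-\tfrac1{m^2+1}$ towards Ramanujan at the real place for each $\GL_{m_i}$, $m_i\le n+1$; (v) conclude that $s\le \tfrac12-\tfrac1{(n+1)^2+1}$ and translate back to the eigenvalue bound $\lambda\ge(\frac{n-1}{2})^2-(\frac12-\frac1{(n+1)^2+1})^2$.

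The main obstacle — and the reason this is only an approximation to Conjecture \ref{CBS} rather than the full statement — is step (iv): one does not know the Ramanujan conjecture for the cuspidal $\GL_m$-constituents at the archimedean place, only the quantitative approximation of Luo--Rudnick--Sarnak, so the temperedness gap cannot be closed to $s=0$. Everything else is essentially bookkeeping once Theorem \ref{Tprinc} is granted: the combinatorics of which Arthur parameters occur, the exclusion of the trivial/CAP pieces that account for the isolated eigenvalues, and the translation between the archimedean Langlands parameter of $\pi_\infty$ and the Laplace eigenvalue via the Casimir on $\SO(n,1)$. One subtlety worth checking carefully is that a constituent $\pi_i\boxtimes[d_i]$ with $d_i>1$ forces $\pi_{i,\infty}$ to be \emph{tempered} for the purposes of the bound — the non-temperedness it contributes is the ``$\SL_2$'' part, already accounted for by the discrete points — so that the only genuinely ``moving'' spectrum comes from $d_i=1$ constituents, to which the $\GL_m$ Ramanujan bound applies directly; this is precisely where the totally-real hypothesis on $E$ and the self-duality are used to stay inside the range where the Luo--Rudnick--Sarnak estimate is available.
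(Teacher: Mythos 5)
The proposal correctly identifies the main ingredients --- Arthur's classification producing an archimedean Arthur parameter, the Luo--Rudnick--Sarnak bound towards Ramanujan for cuspidal self-dual $\GL_m$, and translation of the parameter bound into a Laplace eigenvalue --- and at a high level this is the paper's route: the stated theorem is the case $k=0$ of Theorem~\ref{Tprinc}, whose bound $\tfrac12-\tfrac{1}{N^2+1}$ with $N=2\lfloor(n+1)/2\rfloor\le n+1$ is at least as strong as the $\tfrac12-\tfrac{1}{(n+1)^2+1}$ appearing here, and Theorem~\ref{Tprinc} rests on Proposition~\ref{PA} together with Theorem~\ref{C}. As a high-level sketch it is therefore on target.

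There is nonetheless one genuine gap. Arthur's results in \cite[\S 30]{Arthur} are stated only for \emph{quasi-split} groups, whereas the $G$ producing a congruence hyperbolic manifold is in general a non-quasi-split inner form of $G^*$. Your step (ii) ("pass to the associated global Arthur parameter $\psi$") treats Arthur's classification as directly available for $G$, which it is not; transferring the infinitesimal-character constraint from $G^*$ to $G$ is the technical heart of the paper. Concretely, Section~\ref{1.1} stabilizes the trace formula for $G$, Section~\ref{1.2} destabilizes the stable distributions in terms of iterated endoscopic groups (all products of quasi-split orthogonal groups), and the subsequent section separates by infinitesimal character using Arthur's multipliers and the compatibility of the archimedean transfer with them (identity~(\ref{multComp})). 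This is what actually yields Theorem~\ref{C}, the existence of an Arthur parameter matching $\nu_{\pi_{v_0}}$. Without that chain your argument proves the result only for $G$ quasi-split. A second, more minor, mismatch: your exclusion of $d_i>1$ constituents is phrased via "temperedness", whereas Proposition~\ref{PA} argues instead via an integrality constraint on the infinitesimal character (all coordinates of $\nu_\pi$ other than $s$ lie in $\tfrac{n-1}{2}+\ZZ$, forcing $n_j=1$ when $s\notin\tfrac12\ZZ$, and then self-duality forces $\chi=(z\bar z)^s$); this is the mechanism that actually pins down $s$ and routes it to the Luo--Rudnick--Sarnak estimate, and the case $s\in\tfrac{n}{2}+\ZZ$ still requires the separate argument in \cite[p.~78--80, 82--83]{BC}. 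The remaining bookkeeping (your $m_i\le n+1$ versus the paper's sharper $m_j\le N$) only affects the constant, not the structure.
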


Un aspect remarquable de ce r\'esultat est que, bien que l'on ne connaisse qu'une approximation de la
\og conjecture de Ramanujan \fg \ donnant la d\'ecomposition spectrale des formes automorphes pour
$\GL (n)$, le spectre -- au moins dans l'intervalle $[0 , \left( \frac{n-2}{2} \right)^2 ]$ -- est exactement celui
pr\'edit par Burger et Sarnak. 

Notons -- cela d\'ecoule de \cite{BLS} -- que les valeurs propres \og pures \fg \ apparaissent bien dans le spectre de certaines vari\'et\'es hyperboliques de congruence.

Le th\'eor\`eme \ref{Tprinc} implique plus g\'en\'eralement un r\'esultat de puret\'e pour le spectre
sur les formes diff\'erentielles. Dans cette introduction nous ne retenons que le corollaire suivant, conjecture $A^-$ de \cite{BC}. 

\begin{thm} \label{A-}
Pour $0 \leq k \leq n/2 -1$, il existe une constante strictement positive $\varepsilon (n, k)$ telle que pour
toute vari\'et\'e hyperbolique de congruence non trialitaire $M$ la premi\`ere valeur propre non nulle $\lambda_1^k = \lambda_1^k (M)$ du laplacien sur les $k$-formes diff\'erentielles $L^2$ v\'erifie~:
$$\lambda_1^k \geq \varepsilon (n,k) .$$
\end{thm}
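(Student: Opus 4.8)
The strategy is to deduce the statement from Théorème~\ref{Tprinc} (the general purity theorem for the spectrum on differential forms), exactly as Conjecture~$A^-$ was reduced to Conjecture~$A$ in~\cite{BC}. First I would recall the Matsushima–Murakami decomposition: on a compact (or finite-volume, with the appropriate $L^2$-cohomology caveats) hyperbolic manifold $M = \Gamma\backslash\H^n$, the space of $L^2$ harmonic $k$-forms — and more generally the $k$-form Laplacian spectrum — decomposes according to the unitary dual of $\SO(n,1)$: each automorphic representation $\pi$ occurring in $L^2(\Gamma\backslash G)$ contributes, via $(\mathfrak{g},K)$-cohomology $H^*(\mathfrak{g},K;\pi\otimes\bigwedge^{\bullet}\mathfrak{p}^*)$, a packet of eigenvalues $\lambda^k(\pi)$ on $k$-forms that depends only on the infinitesimal character / Langlands parameter of $\pi_\infty$. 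The eigenvalue $\lambda^k(\pi)$ vanishes precisely when $\pi_\infty$ is cohomological in degree $k$, i.e.\ lies in one of finitely many distinguished $(\mathfrak{g},K)$-modules $A_{\mathfrak{q}}$; otherwise $\lambda^k(\pi)>0$ and one needs a \emph{uniform} lower bound over all congruence $\Gamma$.

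The second step is to feed in Théorème~\ref{Tprinc}. That theorem — obtained from Arthur's classification for orthogonal groups — restricts the possible archimedean components $\pi_\infty$ of nontrialitarian congruence automorphic representations to an explicit finite list of Arthur parameters, and in particular controls how far a cohomological-in-the-relevant-range parameter can be from being tempered. Concretely, for $0\le k\le n/2-1$ the parameter of any $\pi$ with $H^k(\mathfrak{g},K;\pi\otimes\cdots)\ne 0$ is either one of the finitely many $A_{\mathfrak{q}}$'s (these are the "pure" classes, giving $\lambda^k=0$, and they are the ones allowed in the statement) or is bounded away from the walls of the complementary series by the approximation to Ramanujan for $\GL(m)$ quoted in the introduction (the bound $\tfrac12-\tfrac{1}{(n+1)^2+1}$). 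Since in either case the set of allowed $\pi_\infty$ with $\lambda^k(\pi_\infty)$ small is \emph{finite} — a finite union of points in the unitary dual, none accumulating at a non-cohomological point with $\lambda^k=0$ — the infimum
\[
\varepsilon(n,k) := \inf\bigl\{\,\lambda^k(\pi_\infty)\ :\ \pi_\infty\ \text{arises as above and}\ \lambda^k(\pi_\infty)>0\,\bigr\}
\]
is a minimum over a finite set, hence strictly positive; and it depends only on $n$ and $k$, not on $\Gamma$, because the list of allowed parameters produced by Théorème~\ref{Tprinc} is uniform in $\Gamma$. Taking $\lambda_1^k(M)\ge\varepsilon(n,k)$ finishes the argument.

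Two points require care and constitute the real content. The first is the translation, for \emph{differential forms} rather than functions, between "the only small eigenvalues come from a finite list of archimedean parameters" and an actual positive numerical gap: one must check that a cohomological degree-$k$ parameter is \emph{isolated} from below, i.e.\ that a sequence $\pi_\infty^{(i)}$ with $\lambda^k(\pi_\infty^{(i)})\to 0$ but $\lambda^k(\pi_\infty^{(i)})>0$ cannot occur inside the Arthur-type list — this is where the structure theory of $(\mathfrak{g},K)$-cohomology of $\SO(n,1)$-modules and the discreteness of infinitesimal characters of cohomological representations enter, and it is the step I expect to be the main obstacle, since one must rule out limits of non-tempered complementary-series-type parameters approaching a cohomological one. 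The second, more routine, point is the finite-volume (non-cocompact) case: one must handle the continuous spectrum and the contribution of Eisenstein series on $k$-forms, but for hyperbolic manifolds the boundary of the complementary-series range for forms is governed by the same $\GL$-estimates and the Eisenstein contribution is pushed into the absolutely continuous part above the stated threshold, so it does not affect the discrete eigenvalue $\lambda_1^k$. Once these are in place, $\varepsilon(n,k)$ can in principle be made explicit in terms of $n$, $k$ and the Ramanujan approximation exponent, exactly as in the function case of the preceding theorem.
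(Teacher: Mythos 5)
Your approach is correct and matches the paper's: Théorème~\ref{A-} is deduced directly from Théorème~\ref{Tprinc} via the Matsushima--Murakami dictionary between the $k$-form spectrum and the archimedean components of automorphic representations. The two ``points requiring care'' you flag are in fact already resolved by Théorème~\ref{Tprinc} as stated: it describes the $L^2$ spectrum on coclosed $k$-forms explicitly as a finite set of discrete values together with a half-line bounded away from $0$, so $\varepsilon(n,k)$ can simply be read off as the minimum of the first nonzero discrete value (e.g.\ $n-2-2k$ for $k\le (n-4)/2$) and the continuous threshold $\left(\frac{n-1}{2}-k\right)^2 - \left(\frac12 - \frac{1}{N^2+1}\right)^2$, which is strictly positive for $k\le n/2-1$; there is no need to argue separately about isolation or about the Eisenstein contribution beyond what Théorème~\ref{Tprinc} already encapsulates.
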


\noindent
{\it Remarque.} Dans un article r\'ecent \cite{SpehVenky} B. Speh et T. N. Venkataramana ont 
d\'emontr\'e le r\'esultat d'isolation de la valeur propre nulle (th\'eor\`eme \ref{A-}) s'il est vrai en degr\'e
m\'edian (pour $n$ pair) pour tout $n$. Comme on le voit, ce dernier r\'esultat est vrai mais la 
d\'emonstration donne directement un r\'esultat (bien meilleur) pour les autres degr\'es.

\noindent

Ce th\'eor\`eme a un certain nombres de cons\'equences sur la topologie des vari\'et\'es hyperboliques
de congruence; cons\'equences que nous avons rassembl\'ees sous le nom de {\it propri\'et\'es de Lefschetz automorphes} dans \cite{IMRN,BC,SMF}. Nous supposerons -- pour simplifier -- que les
vari\'et\'es hyperboliques $\Gamma \backslash \H^n$ sont compactes et renvoyons aux
articles cit\'es ci-dessus pour les \'enonc\'es dans le cas g\'en\'eral. Dor\'enavant nous supposerons
donc $G$ anisotrope. 

\subsection{Propri\'et\'es de Lefschetz automorphes} Celles-ci pr\'edisent un lien entre entre la (co-)homologie des vari\'et\'es hyperboliques 
$\Gamma \backslash \H^n$ et les \og sections hyperplanes \fg \
donn\'ees par leur sous-vari\'et\'es compactes totalement g\'eod\'esiques. 
Une telle sous-vari\'et\'e est associ\'ee \`a un sous-groupe $H \subset G$ sur $\QQ$ stable par 
l'involution de Cartan de $G$. Le groupe $H(\RR)$ est alors localement isomorphe au produit
du groupe $\SO (k,1)$ (avec $k \leq n$) par un groupe compact. Au niveau des espaces sym\'etriques,
on obtient un plongement totalement g\'eod\'esique $\H^k \hookrightarrow \H^n$ et si $\Gamma \subset G$
est un sous-groupe de congruence l'inclusion ci-dessus passe au quotient en une immersion
totalement g\'eod\'esique 
$$j = j(\Gamma ) : (\Gamma \cap H) \backslash \H^k \rightarrow \Gamma \backslash \H^n .$$
Cette immersion induit une application naturelle entre groupes d'homologie~:
\begin{eqnarray} \label{jstar}
j_{\bullet} : H_{\bullet} ((\Gamma \cap H) \backslash \H^k ) \rightarrow H_{\bullet} (\Gamma \backslash \H^n) .
\end{eqnarray}
Si maintenant $g \in G(\QQ)$, on peut consid\'erer l'immersion 
$$j_g = j_g (\Gamma ) : (H \cap g^{-1} \Gamma g) \backslash \H^k \rightarrow \Gamma \backslash \H^n .$$
On d\'efinit alors l'{\it application de restriction virtuelle}
\begin{eqnarray} \label{resvirt}
H^{\bullet} (\Gamma \backslash \H^n ) \rightarrow \prod_{ g \in G (\QQ)} H^{\bullet} ((H \cap g^{-1} \Gamma g) 
\backslash \H^k )
\end{eqnarray}
induite par les applications de restriction duales aux immersions $(j_g )$. En passant \`a
la limite sur le syst\`eme inductif des sous-groupes de congruence $\Gamma \subset G$, on 
obtient finalement les application naturelles~:
\begin{eqnarray}
H_{\bullet} ({\rm Sh}^0 H ) \rightarrow H_{\bullet} ({\rm Sh}^0 G) \ \ \ \mbox{ et } \ \ \ H^{\bullet} ({\rm Sh}^0 G) \rightarrow \prod_{g \in G (\QQ )} H^{\bullet} ({\rm Sh}^0 H) ,
\end{eqnarray}
o\`u
$$H_{\bullet} ({\rm Sh}^0 G) = \lim_{\substack{\longleftarrow \\ \Gamma \ {\rm cong}}} H_{\bullet} (\Gamma \backslash \H^n ), \ \ \  H^{\bullet} ({\rm Sh}^0 G) = \lim_{\substack{\longrightarrow \\ \Gamma \ {\rm cong}}} H^{\bullet} (\Gamma \backslash \H^n )$$
et de m\^eme pour $H$. 

En ces termes, les deux th\'eor\`emes suivants d\'ecoulent de \cite[Theorem 2.4]{IMRN} et du th\'eor\`eme \ref{A-}, pour le premier, et de la d\'emonstration de \cite[Th\'eor\`eme 8.7]{SMF} (voir aussi 
la Conjecture 1.13), pour le second.  

\begin{thm} \label{th1}
Soient $H \subset G$ deux groupes semi-simples sur $\QQ$. Supposons $G(\RR ) \cong \SO(n,1)
\times ({\rm compact})$, $H(\RR ) \cong \SO (k,1) \times  ({\rm compact})$, 
$H$ invariant par une involution de Cartan de $G$ et tous deux non trialitaires. Alors,
\begin{enumerate}
\item pour tout entier $i \geq n/2$, l'application naturelle
$$H_i ({\rm Sh}^0 H ) \rightarrow H_i ({\rm Sh}^0 G)$$
est injective,
\item pour tout entier $i \leq k/2$, l'application naturelle
$$H^i ({\rm Sh}^0 G) \rightarrow \prod_{g \in G (\QQ )} H^i ({\rm Sh}^0 H)$$
est injective.
\end{enumerate}
\end{thm}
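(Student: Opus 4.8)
La stratégie consiste à ramener chacune des deux assertions à un énoncé de théorie des représentations sur les groupes de cohomologie, via l'isomorphisme de Matsushima-Borel-Wallach, puis à invoquer le Théorème \ref{A-} pour garantir la non-annulation des contributions pertinentes. D'abord je traduirais l'injectivité en cohomologie en termes de $(\mathfrak{g}, K)$-cohomologie : pour $\Gamma$ fixé, $H^i(\Gamma \backslash \H^n) = \bigoplus_{\pi} m(\pi,\Gamma) H^i(\mathfrak{g}, K; \pi_\infty)$, la somme portant sur les représentations automorphes de $G(\A)$, et de même pour $H$. L'application de restriction $H^i(\mathrm{Sh}^0 G) \to \prod_g H^i(\mathrm{Sh}^0 H)$ se décompose alors selon les types archimédiens, et le point est que le Théorème \ref{A-} (isolation de la valeur propre nulle sur les $k$-formes $L^2$, pour $k \le n/2-1$) interdit précisément aux représentations cohomologiques en degré $i \le k/2$ d'être « proches » de la représentation triviale autrement qu'en étant effectivement triviales ou tempérées : c'est ce qui empêche une annulation parasite dans le système projectif.

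Ensuite, pour l'assertion (2), j'appliquerais la méthode de Burger-Sarnak telle que reprise dans \cite[Th\'eor\`eme 8.7]{SMF} : l'injectivité de la restriction virtuelle équivaut à montrer que toute classe de cohomologie de $\mathrm{Sh}^0 G$ qui restreint à zéro sur tous les $\mathrm{Sh}^0 H$ translatés est nulle. On utilise que le sous-espace de $L^2(\Gamma \backslash G(\A))$ engendré par les translatés de $(\Gamma \cap H)$-invariants est, grâce à la propriété $(\tau)$ fournie par le Théorème \ref{A-}, suffisamment gros pour détecter la cohomologie en bas degré ; concrètement, la restriction à $H$ des représentations de $G$ apparaissant en degré $i \le k/2$ contient un vecteur $H$-cohomologique non nul, ce qui est un énoncé de branchement $\SO(n,1) \downarrow \SO(k,1)$ qui est déjà établi dans la littérature citée et dont l'ingrédient global manquant était exactement le contrôle spectral. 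L'assertion (1) s'en déduit par dualité de Poincaré (Lefschetz), l'injectivité en homologie en degré $i \ge n/2$ étant duale de l'injectivité en cohomologie en degré $n-i \le n/2$, combinée à \cite[Theorem 2.4]{IMRN} qui fait le pont entre la restriction virtuelle et l'application directe $j_\bullet$.

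Le passage à la limite sur le système inductif (resp. projectif) des sous-groupes de congruence demande un peu de soin : il faut vérifier que l'injectivité à niveau fini est compatible aux applications de transition, ce qui résulte de la fonctorialité de la $(\mathfrak{g},K)$-cohomologie et du fait que les multiplicités automorphes ne font que croître. Le c\oe ur de l'argument — et donc la seule chose réellement nouvelle ici — est l'emploi du Théorème \ref{A-}, lui-même conséquence du Théorème \ref{Tprinc} et donc des travaux d'Arthur \cite[\S 30]{Arthur} ; tout le reste est une réorganisation de \cite[Theorem 2.4]{IMRN} et de \cite[Th\'eor\`eme 8.7]{SMF}, où ces énoncés étaient démontrés \emph{conditionnellement} à la minoration spectrale que l'on a maintenant établie inconditionnellement dans le cas orthogonal non trialitaire. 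L'obstacle principal, si l'on voulait être complet, serait de contrôler les termes de bord dans le cas non compact (écarté ici par l'hypothèse $G$ anisotrope) ; sous cette hypothèse, l'argument est purement formel une fois le Théorème \ref{A-} en main.
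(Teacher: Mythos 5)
Votre proposition suit essentiellement la même route que le papier : celui-ci ne donne pas de démonstration autonome mais affirme que le Théorème \ref{th1} découle de \cite[Theorem 2.4]{IMRN} combiné au Théorème \ref{A-}, et c'est exactement le schéma que vous reconstituez (Matsushima, méthode de Burger--Sarnak, isolation spectrale comme seul ingrédient nouveau). Seule imprécision à signaler~: l'injectivité de $j_\bullet$ en homologie n'est pas formellement \emph{duale} à l'injectivité de la restriction en cohomologie --- la dualité de Poincaré échange plutôt pushforward et pullback en adjoint l'un de l'autre, donc injectivité d'un côté et surjectivité de l'autre ---, et \cite{IMRN} traite en fait les deux assertions au niveau des paquets automorphes Hecke-équivariants plutôt que par une dualité topologique brute; mais comme vous renvoyez à \cite[Theorem 2.4]{IMRN} pour combler ce pas, cela ne constitue pas une lacune réelle.
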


Noter qu'un r\'esultat proche du th\'eor\`eme \ref{th1} est d\'emontr\'e dans \cite{BHW}.

\begin{thm} 
Soit $G$ un groupe semi-simple sur $\QQ$. Supposons $G(\RR ) \cong \SO(n,1)
\times ({\rm compact})$ et $G$ non trialitaire. Soient $\alpha$ et $\beta$ deux classes de cohomologie de degr\'es respectifs $k$ et $l$ dans
$H^{\bullet} (Sh^0 G)$ avec ${k+l \leq  n/2}$. Il existe alors un \'el\'ement $g \in G({\Bbb Q})$
tel que
$$g(\alpha ) \wedge \beta \neq 0 $$
dans $H^{k+l} (Sh^0 G)$.
\end{thm}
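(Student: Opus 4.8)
The plan is to deduce this "automorphic hard Lefschetz" (or rather "mixed Lefschetz / non-degeneracy of the cup product") statement from the injectivity results of the preceding theorems together with a Poincaré-duality argument on the tower $\mathrm{Sh}^0 G$. The statement asserts that for cohomology classes $\alpha, \beta$ of degrees $k$ and $l$ with $k+l \le n/2$, after translating $\alpha$ by a suitable $g \in G(\mathbb{Q})$, the cup product $g(\alpha)\wedge\beta$ is nonzero. As indicated in the excerpt, this is essentially the content of \cite[Théorème 8.7]{SMF} (see also Conjecture 1.13 there), and the point is simply that the hypothesis fed into that argument -- a spectral gap for the Laplacian on differential forms, i.e. Theorem \ref{A-} -- is now unconditionally available in the non-trialitarian orthogonal case. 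So the proof is really a citation plus a verification that the geometric input required by the \cite{SMF} machinery is in place.

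First I would recall the structure of the argument in \cite{SMF}. One passes to a level $\Gamma$ at which $\alpha$ and $\beta$ are represented by harmonic forms $\omega_\alpha \in \Omega^k$ and $\omega_\beta \in \Omega^l$ on the compact hyperbolic manifold $\Gamma\backslash\H^n$ (using that $G$ is anisotropic, so these manifolds are compact). By Theorem \ref{A-} there is a spectral gap $\varepsilon(n,k+l)>0$ below the bottom of the spectrum on $(k+l)$-forms that are not of the "pure" type corresponding to the trivial/unitary cohomological representations; more precisely, the refined statement underlying Theorem \ref{A-} identifies, via Arthur's classification (\cite[\S 30]{Arthur}), exactly which cohomological representations of $\SO(n,1)$ occur in degrees $\le n/2$, and shows the relevant primitive part of $H^{k+l}$ is concentrated in the tempered/"$\theta$-stable" pieces. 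The key consequence is that the harmonic representative of the primitive part of $g(\alpha)\wedge\beta$, summed over $g$, cannot vanish identically unless $g(\alpha)\wedge\beta$ is exact for every $g$; and one then derives a contradiction with the injectivity of the virtual restriction map.

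The mechanism for that last step is the following. Consider the class $\gamma = g(\alpha)\wedge\beta \in H^{k+l}(\mathrm{Sh}^0 G)$, of degree $k+l \le n/2$. If $\gamma = 0$ in cohomology for all $g \in G(\mathbb{Q})$, one wants to conclude $\alpha$ (or $\beta$) is itself zero, contradicting the choice of nonzero classes. One approach: by Poincaré duality on a fixed compact level and the fact that $G(\mathbb{Q})$ acts with dense orbits on the adelic points away from a fixed level, the vanishing of all translates $g(\alpha)\wedge\beta$ forces the cup-product pairing $H^k \times H^{n-k} \to H^n$, restricted to the $G(\mathbb{A}_f)$-span of $\beta \wedge (\text{top forms on geodesic cycles})$, to be degenerate on $\alpha$. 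Combined with Theorem \ref{th1}(2) -- injectivity of the virtual restriction to the totally geodesic subvarieties $\mathrm{Sh}^0 H$ in degrees $\le k/2$ -- and the dual statement Theorem \ref{th1}(1) in degrees $\ge n/2$, one pins down $\alpha$ via its restrictions to cycles and obtains $\alpha = 0$. Concretely, \cite[\S 8]{SMF} organizes this as a "Lefschetz-type" SL$_2$ or $\mathfrak{sl}_2$-action on the total cohomology of the tower, where the raising operator is cup product with the (virtual) class of a geodesic hypersurface; the hard Lefschetz property for that action, which is what yields the nonvanishing of $g(\alpha)\wedge\beta$, is equivalent to the spectral gap of Theorem \ref{A-} together with the injectivity statements.

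**The main obstacle**, as I see it, is not any new analytic or geometric input but rather checking that the hypotheses of the \cite{SMF} theorems are met verbatim in the present setting: namely that "non-trialitarian" is the exact exclusion needed, that Theorem \ref{A-} supplies the spectral gap in precisely the degree range $k+l\le n/2$ with the uniformity in the level required to pass to the tower $\mathrm{Sh}^0 G$, and that no extra low-degree "non-tempered" cohomology sneaks in from the compact factor of $G(\mathbb{R})$ or from endoscopic contributions that Arthur's classification now makes explicit. I would therefore spend most of the write-up (i) recalling the precise form of the Lefschetz machinery of \cite[\S 8]{SMF}, (ii) quoting Theorem \ref{A-} in the sharp form that isolates the pure spectrum in the full range of degrees $\le n/2$, and (iii) verifying the bookkeeping of Arthur parameters -- via \cite[\S 30]{Arthur} -- that guarantees the primitive cohomology in degree $k+l$ is governed by representations for which the cup product is non-degenerate. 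Once that is in place, the conclusion $g(\alpha)\wedge\beta \ne 0$ for some $g$ is formal.
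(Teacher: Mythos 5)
Your proposal matches the paper's own treatment, which gives no independent argument but simply cites the proof of \cite[Th\'eor\`eme 8.7]{SMF} (see also Conjecture 1.13 there) and observes that Theorem \ref{A-} now supplies, unconditionally, the spectral gap that was the only conditional ingredient of that proof. You correctly identify both the citation and the key analytic input; your speculative reconstruction of the \cite{SMF} mechanism (the Poincar\'e-duality step, whose degree bookkeeping does not quite close, and the appeal to Theorem \ref{th1}, which is not actually the route taken in loc.\ cit., where the argument runs instead through Matsushima's formula, the Burger--Sarnak restriction principle, and isolation of the cohomological $A_{\mathfrak{q}}$-modules in the automorphic dual) is therefore somewhat inaccurate, but since the paper's own ``proof'' is just this citation, that does not constitute a gap.
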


Comme dans \cite{IMRN} on peut \'egalement consid\'erer le cas o\`u $G$ est un groupe 
unitaire tel que $G(\RR)$ soit isomorphe au produit de $\SU (n,1)$ par un groupe compact.
L'espace sym\'etrique associ\'e est l'espace hyperbolique complexe $\H_{\CC}^n$. On note
$H^{\bullet , 0} ({\rm Sh}^0 G)$ les groupes de cohomologie {\it holomorphe} obtenus en passant
\`a la limite sur les sous-groupes de congruence. Le th\'eor\`eme suivant d\'ecoule \'egalement
du th\'eor\`eme \ref{A-}, voir \cite{SMF}. 
Remarquons que l'inclusion $\SO (n,1) \subset \SU (n,1)$ correspond
au plongement totalement g\'eod\'esique -- et totalement r\'eel -- $\H^n \subset \H^n_{\CC}$.

\begin{thm} \label{RUO}
Soient $H \subset G$ deux groupes semi-simples sur $\QQ$. Supposons $H(\RR ) \cong \SO(n,1)
\times ({\rm compacte})$, $G(\RR ) \cong \SU (n,1) \times  ({\rm compacte})$ et $H$ invariant par une involution de Cartan de $G$. Alors, pour tout entier $i \leq n/2$, l'application naturelle
$$H^{i,0} ({\rm Sh}^0 G) \rightarrow \prod_{g \in G (\QQ )} H^i ({\rm Sh}^0 H)$$
est injective.
\end{thm}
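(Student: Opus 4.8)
The plan is to reduce Theorem~\ref{RUO} to the archimedean representation theory underlying Theorem~\ref{A-}, exactly as one reduces the real-hyperbolic Lefschetz statement. The cohomology $H^{i,0}(\mathrm{Sh}^0 G)$ is computed, via the Matsushima–Borel–Wallach formalism, by the $(\mathfrak{g},K)$-cohomology of the automorphic spectrum of $G$ with values in the trivial coefficient system, keeping only the contributions with the correct Hodge type $(i,0)$; on the $\SU(n,1)$ factor these are the holomorphic cohomological representations, i.e.\ the Langlands quotients $A_\mathfrak{q}(0)$ attached to a $\theta$-stable parabolic whose Levi is $\mathfrak{u}(1)\times\mathfrak{u}(n-1,0)$-type, together with the trivial representation. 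First I would make this identification precise and list, among the cohomological unitary representations of $\SU(n,1)$, which ones can restrict nontrivially to $\SO(n,1)\subset\SU(n,1)$ and contribute to $H^i$ of the totally real, totally geodesic $\H^n\subset\H^n_{\mathbb{C}}$; the point is that the branching of an $A_\mathfrak{q}(0)$ from $\SU(n,1)$ to $\SO(n,1)$ contains, in the relevant degrees $i\le n/2$, either a cohomological representation of $\SO(n,1)$ or is controlled by the decay provided by Theorem~\ref{A-}.

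Next I would set up the now-standard Burger–Sarnak restriction machinery at the level of adelic Shimura varieties: an element $\alpha\in H^{i,0}(\mathrm{Sh}^0 G)$ is represented by an automorphic form generating an irreducible automorphic representation $\pi$ of $G(\mathbb{A})$, and restricting $\alpha$ along all the translates $j_g$ detects the automorphic spectrum of $H(\mathbb{A})$ that occurs in the $H(\mathbb{A})$-restriction of the $G(\mathbb{A})$-span of $\pi$. By the Burger–Sarnak principle, every irreducible automorphic representation of $H$ whose archimedean component occurs weakly in $\pi_\infty|_{H(\mathbb{R})}$ — hence in particular occurs in the actual restriction after translating by a suitable $g\in G(\mathbb{Q})$ — appears in this virtual restriction. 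Thus non-injectivity of the map in Theorem~\ref{RUO} would force the existence of a nonzero holomorphic class $\alpha$ of degree $i\le n/2$ all of whose virtual restrictions to $\mathrm{Sh}^0 H$ vanish, which by the branching analysis means that the cohomological $A_\mathfrak{q}(0)$ generating $\alpha$ restricts to $\SO(n,1)$ with no cohomology in degrees $\le n/2$ coming from cohomological constituents.

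The contradiction is then produced by the spectral gap: the restriction $\pi_\infty|_{\SO(n,1)}$, being a unitary representation of $\SO(n,1)$ whose $K_H$-types are constrained by Frobenius reciprocity to meet the degree-$i$ part (since $\alpha$ is a genuine, nonzero cohomology class and the Poincaré dual/Lefschetz pairing on $\H^n_{\mathbb{C}}$ restricts nontrivially along $\H^n$), must contain a vector producing a harmonic $i$-form; by Theorem~\ref{A-}, for $i\le n/2-1$ such a form on a congruence hyperbolic manifold is forced to be of cohomological type (the eigenvalue $0$ is isolated in the spectrum on $i$-forms), and for $i=n/2$ one uses middle-degree Poincaré duality together with the remark of Speh–Venkataramana quoted in the paper. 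Either way one gets a genuine cohomological class on $\mathrm{Sh}^0 H$ in the image, contradicting the assumed vanishing. The main obstacle I anticipate is the branching step: one must show that the $\SU(n,1)\downarrow\SO(n,1)$ restriction of the specific holomorphic $A_\mathfrak{q}(0)$ indeed has its low-degree cohomology detected, i.e.\ that the totally real cycle $\H^n$ is not \emph{cohomologically invisible} to holomorphic classes of degree $\le n/2$; this is precisely where one invokes \cite{SMF}, and it is the technical heart that makes the naive dimension count work.
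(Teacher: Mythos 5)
Your overall strategy — Matsushima identification of $H^{i,0}$, the Burger–Sarnak restriction principle at the adelic level, and the spectral gap from Théorème~\ref{A-} — is indeed the correct framework, and it matches the one the paper implicitly invokes by citing \cite{SMF}. The paper gives no proof of its own here; it simply asserts that the statement follows from Théorème~\ref{A-} and defers to \cite{SMF}. So your job was to reconstruct that argument, and you have the right skeleton. But the way you try to close the loop at the contradiction step does not quite work as written, and two of the auxiliary moves are either imprecise or misapplied.

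First, the phrase ``must contain a vector producing a harmonic $i$-form'' conflates two different things. The pullback $j_g^*\omega$ of a holomorphic $(i,0)$-form $\omega$ to the totally real cycle $(H\cap g^{-1}\Gamma g)\backslash\H^n$ is a closed $i$-form (because $\omega$ is closed), but it is \emph{not} harmonic; the entire difficulty is to show that its \emph{harmonic projection} is nonzero, i.e., that $[j_g^*\omega]\neq 0$ in $H^i$. Your proposal moves from ``the restriction is nonzero pointwise'' to ``its cohomology class is nonzero'' without a bridge. Isolation of the eigenvalue $0$ in the spectrum on $i$-forms (Théorème~\ref{A-}) does not by itself supply that bridge: a nonzero closed form can perfectly well be exact, with all of its spectral mass concentrated above the gap. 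What makes the argument of \cite{IMRN,SMF} close is an additional quantitative ingredient — either a local model computation combined with a deep-level/injectivity-radius argument, or a branching-plus-period nonvanishing argument at the level of the archimedean representation — which you allude to as the ``technical heart'' but do not actually supply, and whose role in producing the contradiction you have not located correctly. The spectral gap is what makes the \emph{automorphic} harmonic projection well-separated and hence detectable; it is not what forces the projection to be nonzero.

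Second, the appeal to the remark of Speh–Venkataramana for the middle-degree case $i=n/2$ is in the wrong direction. As the paper states explicitly, Speh and Venkataramana prove that a gap in the \emph{middle} degree implies a gap in the other degrees; they do not help you pass from $\{0\le k\le n/2-1\}$ (the range covered by Théorème~\ref{A-}) up to $k=n/2$. The correct observation at $i=n/2$ (when $n$ is even) is different: the cohomological constituents of $H^{n/2}$ for $\SO(n,1)$ are discrete series, which are already isolated in the full unitary dual $\widehat{\SO(n,1)}$, so no automorphic spectral gap is needed in that degree. You should make that case explicit rather than reaching for a result that runs the opposite way.

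In short: right framework, but you need to (a) replace ``harmonic restriction'' with the correct statement about harmonic projection and articulate why the projection is nonzero — which is the actual content of \cite[Th. 8.7]{SMF} and its local branching lemma — and (b) fix the handling of the boundary degree $i=n/2$.
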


Comme l'ont fait remarquer Raghunathan et Venkataramana dans \cite{RV} si $H$ est un groupe obtenu -- par restriction des scalaires -- \`a partir d'un groupe sp\'ecial orthogonal sur un corps de nombres totalement r\'eel qui n'est pas une forme exceptionnelle de $\SO (8)$ ou de $\SO(4)$, alors il existe un groupe unitaire $G$ sur $\QQ$, obtenu par restriction des scalaires \`a partir d'un
\og vrai \fg \ groupe unitaire -- c'est-\`a-dire associ\'e \`a une alg\`ebre de matrices -- et tel que $H \subset G$
soit invariant par une involution de Cartan de $G$. De plus si $H(\RR ) \cong \SO(n,1)
\times ({\rm compacte})$ on peut supposer $G(\RR ) \cong \SU (n,1) \times  ({\rm compacte})$. 
Or Anderson montre dans \cite{Anderson} que pour un tel groupe unitaire $G$ et pour tout
entier $i \leq n$, $H^{i,0} ({\rm Sh}^0 G) \neq \{ 0 \}$. Il d\'ecoule donc du th\'eor\`eme \ref{RUO}
le corollaire -- semble-t-il nouveau, voir \cite{Li} pour des r\'esultats partiels -- suivant.

\begin{cor} \label{Cor}
Soit $\Gamma$ un r\'eseau arithm\'etique dans le groupe de Lie r\'eel $\SO (n,1)$, $n \geq 2$.
Si $n=7$ nous supposons de plus que $\Gamma$ ne provient pas d'une forme tordue ${}^{6} \- D_4$.
Si $n=3$ et $\Gamma$ est commensurable au groupe des unit\'es d'une alg\`ebre de quaternions sur
un corps de nombres $L$ avec un unique plongement complexe, nous supposons de plus que
$L$ contient un sous-corps d'indice $2$. 

Alors, il existe un sous-groupe d'indice fini $\Gamma ' \subset \Gamma$ -- que l'on peut choisir de
congruence -- tel que pour tout $i \leq n$, $b_i (\Gamma ')$ -- le $i$-\`eme nombre de Betti
de la vari\'et\'e $\Gamma' \backslash \H^n$ -- est non nul.
\end{cor}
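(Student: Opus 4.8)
The plan is to reduce everything to Théorème \ref{RUO} via the Raghunathan–Venkataramana embedding and Anderson's nonvanishing result. First I would pass from the given arithmetic lattice $\Gamma \subset \SO(n,1)$ to its commensurability class: by Vinberg's arithmeticity description such a $\Gamma$ is, up to commensurability, the group of units of a quadratic form over a totally real number field $k$ (in the $n=3$ quaternionic case, units of a quaternion algebra over a field $L$ with one complex place). I then set $H$ to be the $\QQ$-group obtained by restriction of scalars from $k$ to $\QQ$ of the corresponding special orthogonal group, so that $H(\RR) \cong \SO(n,1) \times (\text{compact})$ and the congruence lattices in $H$ are commensurable with $\Gamma$. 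The hypotheses excluding $n=7$ trialitaire forms, and excluding in the $n=3$ quaternionic case fields $L$ with no index-$2$ subfield, are precisely what is needed so that $H$ is not an exceptional form of $\SO(8)$ or $\SO(4)$ and is non trialitaire — this is the content of the Raghunathan–Venkataramana remark quoted in the excerpt.

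Next I would invoke \cite{RV} to produce a $\QQ$-group $G$, obtained by restriction of scalars from a genuine unitary group attached to a matrix algebra, such that $H \subset G$ is stable under a Cartan involution of $G$ and $G(\RR) \cong \SU(n,1) \times (\text{compact})$. Anderson's theorem \cite{Anderson} then gives $H^{i,0}({\rm Sh}^0 G) \neq \{0\}$ for every $i \leq n$. Applying Théorème \ref{RUO}, the natural map
$$H^{i,0}({\rm Sh}^0 G) \rightarrow \prod_{g \in G(\QQ)} H^i({\rm Sh}^0 H)$$
is injective, so for each $i \leq n$ there exists $g \in G(\QQ)$ with $H^i({\rm Sh}^0 H) \neq \{0\}$; concretely, some congruence quotient $(\Gamma' \cap H)\backslash \H^n$ — with $\Gamma'$ of congruence type — has nonvanishing $i$-th Betti number. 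Since ${\rm Sh}^0 H$ is a filtered limit over congruence subgroups, a diagonal argument over the finitely many relevant degrees $i = 0, 1, \dots, n$ produces a single congruence subgroup $\Gamma'$, of finite index in (a lattice commensurable with) $\Gamma$, with $b_i(\Gamma' \backslash \H^n) \neq 0$ for all $i \leq n$ simultaneously; for $i=0$ this is trivial, and Poincaré duality handles $i = n$ once it holds for $i = 0$.

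The one genuinely delicate point is bookkeeping the commensurability: Corollaire \ref{Cor} asserts the existence of a finite-index \emph{and} congruence subgroup $\Gamma' \subset \Gamma$ itself, whereas the construction above naturally lives inside the arithmetic group attached to $H$, which is only commensurable with $\Gamma$. One must check that $\Gamma$ and the congruence groups of $H(\QQ)$ share a common congruence subgroup — this follows from the congruence subgroup property being irrelevant here (we do not need CSP) but simply from the fact that two commensurable arithmetic lattices have congruence subgroups that are commensurable as congruence subgroups, so one can intersect and descend. The remaining steps — verifying the precise exclusion hypotheses match the ``exceptional $\SO(8)$ / $\SO(4)$'' conditions of \cite{RV}, and checking the low-dimensional cases $n=2,3$ by hand — are routine. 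I expect the main obstacle to be purely expository: stating the commensurability reduction cleanly enough that the passage from ``some congruence quotient of $H$'' to ``a congruence subgroup of $\Gamma$'' is transparent.
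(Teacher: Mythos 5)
Your proposal follows exactly the route the paper takes: reduce the commensurability class of $\Gamma$ to congruence lattices in the restriction-of-scalars group $H$, invoke Raghunathan–Venkataramana to embed $H$ Cartan-equivariantly in a unitary group $G$ with $G(\RR)\cong\SU(n,1)\times(\text{compact})$, use Anderson's nonvanishing of $H^{i,0}({\rm Sh}^0 G)$, and conclude by the injectivity of the restriction map of Théorème~\ref{RUO}. This matches the paper's own (implicit) derivation of the corollary in both structure and in the identification of the excluded $n=7$ trialitary and $n=3$ quaternionic cases with the exceptional forms of $\SO(8)$ and $\SO(4)$.
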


Une conjecture c\'el\`ebre (attribu\'ee \`a Thurston dans \cite{Borel}) affirme -- au moins pour le premier nombre de Betti -- qu'un r\'esultat similaire devrait \^etre vrai pour tout r\'eseau dans
$\SO (n,1)$. Dans \cite{GAFA} l'analogue du corollaire \ref{Cor} est v\'erifi\'e pour les r\'eseaux
non-arithm\'etiques construits par Gromov et Piatetski-Shapiro \cite{GPS}. Le cas
g\'en\'eral est encore ouvert. C'est \'egalement le cas pour la plupart des r\'eseaux arithm\'etiques
exclus dans le corollaire \ref{Cor}, \`a l'exception notable de ceux couverts par les r\'esultats de Clozel 
\cite{Clozelb1} et Rajan \cite{Rajan}. 

\subsection{Groupes trialitaires} Le cas des r\'eseaux arithm\'etiques exceptionels dans $\SO (7,1)$ provenant d'une forme tordue 
${}^{3,6} \- D_4$ est particuli\`erement int\'eressant. Commen\c{c}ons par remarquer que ces r\'eseaux
proviennent en fait n\'ecessairement d'une forme tordue ${}^6 \- D_4$. De tels r\'eseaux existent bel et bien --
cela r\'esulte par exemple de \cite{PrasadRapinchuk} -- et sont tous cocompacts. 

Dans cet article
on exclut le cas de ces formes exceptionnelles; les groupes consid\'er\'es sont alors des formes int\'erieures de groupes quasi-d\'eploy\'es auxquels la th\'eorie d'Arthur s'applique. Nous d\'etaillons 
le cas des formes tordues ${}^6 \- D_4$ dans un travail en pr\'eparation. 
Au prix d'un grand nombre d'efforts techniques le th\'eor\`eme \ref{A-} devrait s'\'etendre \`a ces formes
tordues. Nous ne le v\'erifions pas -- les corollaires mentionn\'es plus haut sont essentiellement vides. 
Nous montrons par contre le th\'eor\`eme suivant qui contraste avec le corollaire \ref{Cor} et vient confirmer la conjecture 4.6 de \cite{JIMJ}.

\begin{thm} \label{Tb0}
Soit $G$ un groupe un groupe projectif orthogonal ou de spin du type ${}^6 \- D_4$ d\'efini 
sur un corps de nombres totalement r\'eel. 
Alors, pour tout sous-groupe de congruence $\Gamma \subset G$,
$$b_1 (\Gamma ) =0.$$
\end{thm}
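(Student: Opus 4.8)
The strategy is to reduce the vanishing of $b_1(\Gamma) = \dim H^1(\Gamma\backslash\H^7, \C)$ to a statement about automorphic representations of the trialitary form $G$ of type ${}^6\!D_4$, and then to analyze the possible Arthur parameters that could contribute. First I would apply the Matsushima formula: since $G(\RR)\cong \SO(7,1)\times(\mathrm{compact})$ and $\H^7$ is the associated symmetric space, a nonzero class in $H^1$ forces the existence of an automorphic representation $\pi = \pi_\infty\otimes\pi_f$ occurring in $L^2(\Gamma\backslash G(\Ade))$ with $\pi_f$ having nonzero invariants under the congruence level and $\pi_\infty$ an irreducible unitary $(\mathfrak g,K)$-module with $H^1(\mathfrak g, K; \pi_\infty)\neq 0$. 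For $\SO(n,1)$ with $n=7$, the classification of unitary representations with nonzero $(\mathfrak g,K)$-cohomology in degree $1$ is explicit: the only candidates are the trivial representation (contributing to $H^0$, not $H^1$) and a specific cohomological representation $\pi_\infty$ whose infinitesimal character and exponents are those of the (non-tempered) Langlands quotient attached to the "first" point $j=1$ in Conjecture~\ref{CBS} — equivalently, $\pi_\infty$ is the representation whose $\Delta$-eigenvalue is $\left(\frac{n-1}{2}\right)^2 - \left(\frac{n-1}{2}-1\right)^2 = n-2 = 5$, not a tempered one.

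The core of the argument is then to rule out, using Arthur's classification, any global automorphic representation $\pi$ of $G(\Ade)$ with this prescribed $\pi_\infty$. Here is where the trialitary hypothesis enters crucially and where the contrast with Corollary~\ref{Cor} appears. For non-trialitary orthogonal groups the classification produces Arthur parameters $\psi = \psi_1\boxplus\cdots$ built from cuspidal self-dual representations of $\GL(m)$ over the totally real base field, and the relevant cohomological $\pi_\infty$ can be matched to a parameter with a nontrivial $\SL_2$-factor of Arthur type $[1]\oplus\cdots$ — such parameters do exist and do contribute (this is what underlies the appearance of the pure eigenvalues). For a form of type ${}^6\!D_4$ the dual group is $\mathrm{Spin}_8(\C)$ (or $\mathrm{PSO}_8$) twisted by an order-$3$ outer automorphism via the map $\Gal\to S_3$, and one must transfer to the quasi-split form and then to $\GL$. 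The point is a combinatorial/representation-theoretic one about triality: the $L$-parameter $\phi: W_\RR\to {}^L G$ whose associated Langlands quotient is the cohomological $\pi_\infty$ above would have to be compatible with the $S_3$-action on the Dynkin diagram of $D_4$, and I would show that the corresponding global parameter cannot exist — the outer twist obstructs the formation of the isobaric sum that would be needed, because the three outer nodes of $D_4$ are permuted cyclically and no self-dual cuspidal datum on $\GL$ over the base field can realize the required parameter while being fixed by the triality. Concretely this amounts to checking that the "Arthur $\SL_2$" contributing to degree-$1$ cohomology would force a $1$-dimensional summand invariant under the triality action, which is incompatible with the structure of ${}^6\!D_4$ parameters (the relevant representation of the component group pairs trivially, so the would-be contribution is killed by the multiplicity formula).

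After the representation-theoretic input, the remaining steps are bookkeeping: translate "$H^1(\mathfrak g,K;\pi_\infty)\neq 0$, $\pi_\infty$ non-tempered of the above type" into "the global Arthur parameter $\psi$ of $\pi$ has an $\SL_2$-factor acting nontrivially and of the specified size", invoke Arthur's multiplicity formula for the quasi-split inner form and the stabilization of the trace formula (now unconditional by \cite{Ngo} and the weighted/twisted fundamental lemma, as recalled in the introduction) to see that the local component $\pi_\infty$ at the archimedean place lies in the packet only with the wrong sign character, hence contributes with multiplicity zero; conclude $H^1 = 0$. I expect the main obstacle to be the precise determination of the archimedean Arthur packets for the real form $\SO(7,1)$ of a ${}^6\!D_4$ group together with the explicit evaluation of the canonical sign character $\epsilon_\psi$ on the relevant element of the component group — this is the delicate point where the triality must be shown to flip (or fail to match) the sign, and it is exactly the computation deferred to the work "en préparation" for the full extension of Theorem~\ref{A-}, but which here is only needed in the single degree $k=1$ and for the single non-tempered parameter, making it tractable.
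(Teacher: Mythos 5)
The paper itself does not prove Theorem~\ref{Tb0}: the abstract says ``nous annon\c{c}ons la d\'emonstration --- encore en pr\'eparation'', the introduction says the case ${}^6\!D_4$ is deferred to ``un travail en pr\'eparation'', and every technical section of the paper (Sections~2 through~6) opens by assuming $G$ non-trialitary. So there is no proof in the paper to compare your proposal against; the statement is only announced.

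That said, your proposal has a fundamental gap. You plan to ``transfer to the quasi-split form and then to $\GL$'' and to analyze Arthur parameters $\psi_1\boxplus\cdots$ built from self-dual cuspidal data on $\GL(m)$. But Arthur's classification \cite[\S 30]{Arthur}, which is the backbone of the rest of this paper, treats quasi-split $\Sp(2n)$ and $\SO(N)$ (and their inner forms) precisely via the standard $N$-dimensional embedding ${}^L G \hookrightarrow \GL_N(\C)\times W_F$. For a group of type ${}^6\!D_4$ --- a form of $\mathrm{Spin}_8$ or $\mathrm{PSO}_8$, not of $\SO_8$ --- there is no such embedding: triality permutes the three inequivalent $8$-dimensional representations of $\mathrm{Spin}_8(\C)$ (vector, spin$^+$, spin$^-$), so the $S_3$-twisted $L$-group does not factor through ${}^L\GL_8$. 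Consequently the ``isobaric sum on $\GL$'' picture and Arthur's multiplicity formula with its sign character $\epsilon_\psi$, which your argument invokes at the crucial step, are simply not available for trialitary $D_4$. This is exactly why the authors set these forms aside and say the proof requires substantial additional technical machinery; it is not merely a matter of computing one archimedean packet and one sign for the $k=1$ parameter, as you suggest. A correct strategy would have to go around this obstruction --- for instance by base change to the $S_3$-splitting field, or by using a triality-invariant $L$-embedding such as the $28$-dimensional adjoint one --- neither of which is set up in this paper.

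A smaller point: you identify the cohomological $\pi_\infty$ with the ``first point $j=1$'' of Conjecture~\ref{CBS}, whose Laplace eigenvalue on \emph{functions} is $n-2=5$. But the representation with $H^1(\mathfrak g,K;\pi_\infty)\neq 0$ for $\SO(7,1)$ is the Vogan--Zuckerman module $A_{\mathfrak q_1}$, which does not contain the trivial $K$-type and therefore does not appear in the function spectrum; it contributes harmonic $1$-forms (eigenvalue $0$ on $1$-forms), corresponding in the paper's notation to $J(\tau_1,s)$ with $s=(n-3)/2$. The underlying Arthur $\SL_2$-shape of the parameter is indeed the one you have in mind, but the translation via Conjecture~\ref{CBS} as stated is off.
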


Le probl\`eme des sous-groupes de congruence reste ouvert pour ces groupes alg\'ebriques; il n'est
donc pas exclu que la conjecture de Thurston soit v\'erifi\'ee mais, contrairement \`a tous les autres cas
arithm\'etiques
connus, pour la d\'emontrer il sera n\'ecessaire de consid\'erer des sous-groupes qui ne sont pas de congruence.

On l'a dit, le th\'eor\`eme ci-dessus n'est pas vide, il existe de tels $G$.
Concluons par la construction -- due \`a Allison \cite[Example 11.8]{Allison} -- d'un exemple.

Soit $B = \QQ [b_0]$, o\`u le polyn\^ome minimal de $b_0$ est $x^4+3x-\frac12$. 
L'alg\`ebre $B$ est de dimension $4$ sur $\QQ$, elle est naturellement munie d'une trace $t$ et 
d'une involution $\theta : b \mapsto -b + \frac12 t(b)$. 
On peut associer \`a $B$ une alg\`ebre \`a involution -- non associative -- de dimension $8$~:
$${\rm CD} (B , -3) := B \oplus s_0 B,$$
muni du produit 
$$(b_1 + s_0 b_2 ) (b_3 + s_0 b_4 ) = b_1 b_3 + \mu (b_2 b_4^{\theta} )^{\theta} + s_0 (b_1^{\theta} b_4
+ (b_2^{\theta} b_3^{\theta})^{\theta} ) $$
et de l'involution 
$$\overline{b_1 + s_0 b_2} = b_1 - s_0 b_2^{\theta} .$$
Notons 
$$\gamma = \left( 
\begin{array}{ccc}
1 & & \\
& 1 & \\
& & 1 
\end{array} \right)$$
et 
$${\cal P} = \{ X \in M_3 ({\rm CD} (B , -3) ) \; : \; \gamma^{-1} {}^t \- \overline{X} \gamma = -X \mbox{ et } {\rm tr} (X) =0 \}.$$
L'alg\`ebre ${\cal K} := {\cal K} ({\rm CD} (B , -3))$ obtenue en formant la somme directe de l'alg\`ebre des d\'erivations int\'erieures de ${\rm CD} (B , -3)$ et de l'alg\`ebre ${\cal P}$ est une alg\`ebre de type
$D_4$. Sur $\CC$, l'alg\`ebre ${\rm CD} (B , -3)$ est isomorphe \`a l'alg\`ebre de Cayley 
${\cal C}_{\CC}$. Soient $n$ et $t$ respectivement la norme et la trace de ${\cal C}_{\CC}$ et
$$\langle x,y,z \rangle = \frac12 t (x(yz)). $$
La forme trilin\'eaire $\langle \cdot , \cdot , \cdot \rangle$ v\'erifie 
$$\langle x,y,z \rangle = \langle z,x,y \rangle = \langle \overline{y} , \overline{x} , \overline{z} \rangle \ \ \ 
\mbox{ pour } x, y, z \in {\cal C}_{\CC} .$$
On peut alors v\'erifier que l'alg\`ebre ${\cal K}_{\CC}$ est isomorphe \`a l'alg\`ebre 
$${\cal L} = \{ (L_1 , L_2 , L_3 ) \in \mathfrak{o} (n)^3 \; : \; \langle 
L_1 x , y , z \rangle + \langle x, L_2 y , z \rangle + \langle x,y,L_3 z \rangle = 0 \mbox{ pour } x, y, z \in {\cal C}_{\CC} \}.$$
On peut donc identifier ${\cal K}$ \`a une $\QQ$-forme de ${\cal L}$ et la sous-alg\`ebre associative
engendr\'ee par ${\cal K}$ dans ${\rm End}_{\CC} ({\cal C}_{\CC} )^3$ -- appel\'e {\it invariant d'Allen}
de ${\cal K}$ -- est l'alg\`ebre $M_4 (D)$ o\`u $D$ est l'alg\`ebre de quaternions 
$\left(\frac{\nu , -3}{K} \right)$ sur 
$K= \QQ (\nu )$ extension cubique -- de groupe de Galois $\mathfrak{S}_3$ -- associ\'ee 
\`a un \'el\'ement $\nu$ de polyn\^ome minimal $h(x)=x^3 +2x -9$.

Le polyn\^ome $h$ poss\`ede une racine r\'eelle et deux racines complexes conjugu\'ees de 
telle mani\`ere que $D_{\RR} \cong M_2 (\RR ) \oplus M_2 (\CC )$ et ${\rm sgn} ({\cal K}_{\RR} ) = -14$.
L'alg\`ebre ${\cal K}_{\RR}$ est donc isomorphe \`a $\mathfrak{o} (7,1)$ et les groupes -- projectif orthogonal ou de spin -- associ\'es
\`a l'alg\`ebre de Lie ${\cal K}$ sont des exemples de groupes auxquels le th\'eor\`eme \ref{Tb0} s'applique.

\section{Groupes orthogonaux}

On note $G$ un groupe sp\'ecial orthogonal sur un corps de nombres totalement r\'eel $F$. Dans
cette section on suppose $G$ non trialitaire, c'est-\`a-dire que l'on exclut le cas des formes exceptionnelles de $\SO (8)$. Soit ${\mathbf A}$ l'anneau des ad\`eles sur $F$.

Nous supposons $G$ compact \`a toutes les places \`a l'infini de $F$ sauf une -- not\'ee $v_0$ -- o\`u le
groupe est isomorphe au groupe $\SO(p,q)$. Supposons $p\geq q$ et notons $m=p+q$ le nombre de variables du groupe orthogonal, $\ell$ la partie enti\`ere de $m/2$ et $N=2\ell$. Le groupe $G$ est 
toujours forme int\'erieure d'un groupe quasi-d\'eploy\'e $G^*$. 

On d\'ecrit maintenant le groupe $G^*$ en distinguant deux cas selon la parit\'e de $m$.

\subsection{} Supposons d'abord $m=N+1$ impair. Alors, le groupe $G$ est forme int\'erieure du groupe orthogonal
{\it d\'eploy\'e} $G^* = \SO (m)$ sur $F$ associ\'e \`a la forme bilin\'eaire symm\'etrique attach\'ee \`a 
$$\left(
\begin{array}{ccc}
0 & & 1 \\
 & \adots & \\
1 &  & 0 
\end{array} \right).$$ 
Le groupe dual (complexe) de $G^*$ est $\widehat{G} = \Sp (N, \CC)$, le groupe symplectique de la forme altern\'ee sur $\CC^N$ de matrice 
$$\hat{J} = \left(
\begin{array}{cccccc}
& & & & & -1 \\
& & & & \adots & \\
& & & -1 & &  \\
& & 1 & & & \\
& \adots & & & & \\
1 & & & & &  
\end{array} \right),$$ 
et ${}^L \- G = \widehat{G} \times \Gamma_F$.

\subsection{} Supposons maintenant $m=N$ pair. On note $\SO (N)$ le groupe orthogonal
{\it d\'eploy\'e}  sur $F$ associ\'e \`a la forme bilin\'eaire symm\'etrique attach\'ee \`a 
$$J = \left(
\begin{array}{cc}
0 &  1_{\ell} \\
1_{\ell} &  0 
\end{array} \right).$$
Les formes {\it quasi-d\'eploy\'ees} de $\SO (N)$ sont param\'etr\'es par les morphismes de 
$\Gamma_F  = {\rm Gal} (\overline{{\mathbf Q}} / F)$ vers $\ZZ / 2\ZZ$. D'apr\`es la th\'eorie du corps de classes ces morphismes correspondent aux caract\`eres $\eta$ de $F^* \backslash {\mathbf A}^*$ tels que $\eta^2 = 1$ -- caract\`eres d'Artin d'ordre deux. Dans la suite nous notons $\SO (N , \eta)$ le groupe orthogonal tordu obtenu en faisant agir $\Gamma_F$ sur la diagramme de Dynkin -- ou la forme de $\SO (2)$ si $N=2$ -- {\it via} le caract\`ere $\eta$.
Lorsque $m=N$ est pair, il existe un caract\`ere d'Artin d'ordre deux $\eta$
tel que le groupe $G$ soit forme int\'erieure du groupe {\it quasi-d\'eploy\'e} 
$G^* = \SO(N,\eta)$. Le groupe dual (complexe) de $G^*$ est alors $\widehat{G} = \SO (N, \CC)$ et 
${}^L \- G = \widehat{G} \rtimes \Gamma_F$, o\`u $\Gamma_F$ op\`ere sur $\widehat{G}$ par un
automorphisme d'ordre $2$ -- trivial sur le noyau de $\eta$ -- et respectant un \'epinglage. (Pour une
description explicite, voir \cite[p. 79]{BC}.) Notons que le caract\`ere $\eta$ est trivial \`a l'infini 
si et seulement si $(p-q)/2$ est pair. Au niveau
des groupes r\'eels cela revient \`a la dichotomie~: si $(p-q)/2$ est impair, $\SO(p,q)$ est forme
int\'erieure de $\SO (\ell -1 ; \ell +1)$, si $(p-q)/2$ est pair, $\SO (p,q)$ est forme int\'erieure de $\SO (\ell ,
\ell)$ (d\'eploy\'e sur $\RR$). 

\subsection{} Nous supposons $G_{v_0}= \SO (p,q)$ d\'efini par la forme de matrice 
$$\left( 
\begin{array}{c|ccc}
1_{p-q} & & & \\
\hline 
& & & 1 \\
& & \adots & \\
& 1 & & 
\end{array} \right).$$
Une sous-alg\`ebre de Cartan $\mathfrak{t}$ de $\mathfrak{g}=\mathfrak{so} (p,q)$ est alors donn\'ee par les matrices 
\begin{eqnarray} 
X = \left(
\begin{array}{ccc|cccc}
& -x_1 & & &  & & \\ 
x_1 & & & & & & \\
& & \ddots &  & & & \\
\hline 
& & &  \ddots & & &  \\
& &  & & x_{\ell} & &  \\
& & & & & -x_{\ell} &  \\
& & & & &  & \ddots 
\end{array}
\right)  \ \ \ (x_i \in \RR ).
\end{eqnarray}

Posons $y_i = \sqrt{-1} x_i$, pour $i \leq \ell - q$, et $y_i = x_i$, pour $i > \ell -q$. Les coordonn\'ees $y$
donnent un isomorphisme
$$\mathfrak{t}_{\CC} = \mathfrak{t} \otimes \CC \cong \CC^{\ell}$$
pour lequel les racines dans $\Delta (\mathfrak{g}_{\CC} , \mathfrak{t}_{\CC})$ sont r\'eelles. Le groupe de Weyl 
correspondant $W$ est $\mathfrak{S}_{\ell} \ltimes \{ \pm 1\}^{\ell}$, dans le premier cas, alors que 
dans le deuxi\`eme cas $W = \mathfrak{S}_{\ell} \ltimes \{ \pm 1\}^{\ell -1}$, 
$\{ \pm 1 \}^{\ell -1}$ \'etant le sous-groupe de $\{ \pm 1 \}^{\ell}$, op\'erant diagonalement, 
d\'efini par $\small\prod s_i =1$. Notons $\mathfrak{t}_s$ l'espace vectoriel r\'eel engendr\'e par les $y_i$. Cet
espace s'identifie \`a une sous-alg\`ebre de Cartan d\'eploy\'ee de la forme r\'eelle d\'eploy\'ee de $G_{v_0}$; il s'identifie en particulier au sous-espace correspondant pour $G^*$.

\section{Spectre automorphe des formes quasi-d\'eploy\'ees} \label{qs}

Dans ce chapitre nous supposons $G$ quasi-d\'eploy\'e, autrement dit $G=G^*$. Dans ce cas les 
r\'esultats globaux d'Arthur \cite[\S 30]{Arthur} s'appliquent. En particulier le th\'eor\`eme 30.2 donne une d\'ecomposition de la partie 
discr\`ete du spectre automorphe de $G({\mathbf A})$ selon certains param\`etres globaux 
$\tilde{\psi} \in \widetilde{\Psi}_2 (G)$. En la place archim\'edienne $v_0$ un tel param\`etre 
$\tilde{\psi}$ se localise en un 
param\`etre local $\tilde{\psi}_{v_0}$ \'egal \`a une somme directe formelle
\begin{eqnarray} \label{param}
\tilde{\psi}_{v_0} = \psi_1 \boxplus \cdots \boxplus \psi_r ,
\end{eqnarray}
o\`u chaque $\psi_j$ est la localisation en $v_0$ d'un param\`etre global discret pour 
$\GL (N_j)_{|F}$ avec 
$$N= N_1 + \ldots + N_r.$$
Les rangs $N_j$ sont des entiers strictement positifs de la forme $N_j=m_j n_j$ avec $m_j , n_j \geq 1$
et chaque param\`etre $\psi_j$ est le produit tensoriel formel 
$\psi_j = \mu_j \boxtimes r_j$ d'une repr\'esentation irr\'eductible $\mu_j$ de $\GL (m_j , \RR)$ -- composante archim\'edienne d'une repr\'esentation automorphe cuspidale de $\GL (m_j)$ -- 
et de l'unique repr\'esentation irr\'eductible $r_j$ de 
dimension $n_j$ du groupe $\SL (2, \CC)$. Enfin chaque param\`etre $\psi_j$ est autodual, 
{\it i.e.} isomorphe \`a $\psi_j^{\theta} = \mu_j^{\theta} \boxtimes r_j$, o\`u
$\mu_j^{\theta}$ est la repr\'esentation contragr\'ediente de $\mu_j$. 

\subsection{Param\`etres d'Arthur (g\'en\'eralis\'es)} Notons $\varphi_j : W_{\RR} \rightarrow {}^L \- \GL (m_i)$ le param\`etre de Langlands associ\'e \`a la
repr\'esentation $\mu_j$. On pr\'ef\`erera voir $\tilde{\psi}_{v_0}$ 
comme la repr\'esentation de dimension $N$ du groupe $W_{\RR} \times \SL (2 , \CC)$ \'egale \`a
$\bigoplus_j \varphi_j \otimes r_j$. Notons $\psi : \CC^{\times} \times \SL (2 , \CC) \rightarrow \GL (N , \CC)$ sa restriction \`a $W_{\CC} = \CC^{\times}$. Chacun des param\`etres $\varphi_j$ se restreint \`a $\CC^{\times}$ en une 
repr\'esentation semi-simple donn\'ee par $m_j$ (quasi-)caract\`eres $\chi$ de la forme $z^p \overline{z} \- {}^q$ avec $p-q \in \ZZ$ ; 
puisque $\tilde{\psi}$ est un param\`etre r\'eel, 
si $\chi$ appara\^{\i}t dans $\varphi_j$ alors $\chi^{\sigma}$ appara\^{\i}t aussi dans $\psi$ (mais
peut-\^etre pour un indice diff\'erent de $j$). Ici on a not\'e
$\chi^{\sigma} (z) =\chi (\overline{z})$. 
 
La conjecture de Ramanujan g\'en\'eralis\'ee impliquerait que chacun de ces caract\`eres
est unitaire, {\it i.e.} ${\rm Re} (p+q) =0$. \`A d\'efaut, le th\'eor\`eme 
de Luo, Rudnick et Sarnak \cite{LuoRudnickSarnak} -- tel qu'\'etendu dans \cite[Chapitre 7]{BC} --
implique que $| {\rm Re} (p + q ) | < 1 - \frac{2}{m_j^2 +1}$. La condition d'auto-dualit\'e force 
chaque caract\`ere $\chi$ \`a appara\^{\i}tre avec son dual $\chi^{-1}$. Enfin 
$\psi $ se factorise \`a travers $\widehat{G}$ puisque $\tilde{\psi} \in \widetilde{\Psi}_2 (G)$.  

Appelons donc {\it param\`etre d'Arthur g\'en\'eralis\'e} toute repr\'esentation 
$$\psi = \bigoplus_{j=1}^r \varphi_j \otimes r_j : \CC^{\times} \times \SL (2 , \CC ) \rightarrow \widehat{G},$$
o\`u chaque $\varphi_j$ est une repr\'esentation semi-simple de $\CC^{\times}$ de rang $m_j$, $r_j$
est la repr\'esentation irr\'eductible de 
dimension $n_j$ du groupe $\SL (2, \CC)$, $N= \sum_j n_j m_j$ et si $\chi = z^p \overline{z} \- {}^q$ est
un caract\`ere apparaissant dans un $\varphi_j$, $\chi^{\sigma}$ et $\chi^{-1}$ apparaissent aussi et~:
$$p-q \in \ZZ \ \ \ \mbox{ et } \ \ \ |{\rm Re} (p + q ) | < 1 -  \frac{2}{m_j^2 +1} .$$

\subsection{Caract\`ere infinit\'esimal} \`A tout param\`etre d'Arthur g\'en\'eralis\'e $\psi$ on associe un param\`etre
$$\begin{array}{cccl}
\varphi_{\psi} : & \CC^{\times} & \rightarrow & \widehat{G} \subset \GL (N, \CC) \\
& z & \mapsto & \psi \left( z , \left(
\begin{array}{cc}
(z \overline{z})^{1/2} & 0 \\
0 & (z \overline{z})^{-1/2}
\end{array} \right) \right) .
\end{array}$$
\'Etant semi-simple, il est (\`a conjugaison pr\`es) d'image contenu dans le tore maximal
$$\widehat{T} = \{ {\rm diag} (x_1 , \ldots , x_{\ell} , x_{\ell}^{-1} , \ldots , x_1^{-1} ) \} $$ de 
$\widehat{G}$. On peut donc \'ecrire $$\varphi_{\psi} = ( \eta_1 , \ldots , \eta_{\ell} , \eta_{\ell}^{-1} , 
\ldots , \eta_1^{-1} )$$ o\`u les $\eta_i$ sont des caract\`eres, de la formes 
$z^{P_i} \overline{z} \- {}^{Q_i}$. On v\'erifie ais\'ement que le vecteur
$$\nu_{\psi} = (P_1 , \ldots , P_{\ell} ) \in \CC^{\ell} \cong \mathfrak{t}_{\CC}$$
est uniquement d\'efini modulo $W$; on l'appelle le {\it caract\`ere infinit\'esimal associ\'e \`a 
$\psi$}.  Noter que le param\`etre -- vu comme vecteur dans $\CC^{N} = \CC^{2 \ell}$ -- associ\'e \`a $\varphi_{\psi} : \CC^{\times} 
\rightarrow \GL (N, \CC)$ est $(\nu_{\psi } , -\nu_{\psi})$. La proposition suivante 
d\'ecoule du th\'eor\`eme 30.2 de \cite{Arthur} 
et de \cite[Lemmes 6.3.1 \& 6.4.1]{BC}.

\begin{prop} Soit $\pi$ est une repr\'esentation automorphe de $G$.
Alors, il existe un param\`etre d'Arthur g\'en\'eralis\'e $\psi$ tel que le caract\`ere infinit\'esimal de $\pi_{v_0}$
soit associ\'e \`a $\nu_{\psi}$.
\end{prop}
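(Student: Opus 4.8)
The plan is to unwind the definitions and read off the claim directly from Arthur's classification. First I would recall that, by Théorème 30.2 of \cite{Arthur}, since $G$ is quasi-déployé the discrete automorphic spectrum of $G(\mathbf{A})$ decomposes along global parameters $\tilde\psi \in \widetilde\Psi_2(G)$; given an automorphic representation $\pi$ appearing in this spectrum, pick such a $\tilde\psi$ contributing $\pi$. (One should first reduce to $\pi$ in the discrete spectrum, or more precisely observe that the relevant representations for the application are the ones occurring discretely; this is where a word is needed, but it is exactly the content of how $\widetilde\Psi_2(G)$ is used.) Localizing $\tilde\psi$ at the archimedean place $v_0$ gives, as recalled in \eqref{param}, a formal sum $\tilde\psi_{v_0} = \psi_1 \boxplus \cdots \boxplus \psi_r$ with each $\psi_j = \mu_j \boxtimes r_j$, and the bounds of Luo--Rudnick--Sarnak as extended in \cite[Chapitre 7]{BC} guarantee that the resulting collection of characters $\chi = z^p\bar z^q$ satisfies $p-q\in\ZZ$ and $|\mathrm{Re}(p+q)| < 1 - \frac{2}{m_j^2+1}$; together with self-duality and the fact that $\psi$ factors through $\widehat G$ (because $\tilde\psi\in\widetilde\Psi_2(G)$), this is precisely a \emph{paramètre d'Arthur généralisé} $\psi$ in the sense defined just above the proposition.

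Next I would match infinitesimal characters. Attach to $\psi$ the parameter $\varphi_\psi: \CC^\times \to \widehat G$ and hence the vector $\nu_\psi \in \CC^\ell \cong \mathfrak{t}_\CC$, well defined modulo $W$, as in the definition of the caractère infinitésimal. On the other hand, the local Arthur packet attached to $\psi_{v_0}$ consists of representations of $G(F_{v_0}) = \SO(p,q)$ whose infinitesimal character is the Harish-Chandra parameter determined by $\varphi_{\psi}$ — this is the content of \cite[Lemmes 6.3.1 \& 6.4.1]{BC}, which compute the infinitesimal character of the members of an archimedean Arthur packet in terms of $\nu_\psi$ (using the $y$-coordinates set up in \S2.3, in which roots are real and $W = \mathfrak{S}_\ell \ltimes \{\pm1\}^{\ell}$ or its index-two subgroup). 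Since $\pi_{v_0}$ lies in the packet attached to $\psi_{v_0}$, its infinitesimal character is the one associated to $\nu_\psi$, which is exactly the assertion.

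The routine part is bookkeeping: checking that the normalizations in \cite[\S30]{Arthur} for the Langlands parameter $\varphi_j$ of $\mu_j$, the $\SL(2,\CC)$-twist giving $\varphi_\psi$, and the identification $\widehat T = \{\mathrm{diag}(x_1,\dots,x_\ell,x_\ell^{-1},\dots,x_1^{-1})\}$ are compatible with the $(\nu_\psi,-\nu_\psi)$ description of the $\GL(N)$-parameter, so that the half-sum shift and the Weyl-group ambiguity land correctly. The one genuine step — and the main point where I expect the argument to need care rather than mere citation — is the passage from the \emph{global} parameter $\tilde\psi$ to the \emph{local} statement about $\pi_{v_0}$: one must invoke that $\pi_{v_0}$ is a constituent of $\mathcal{A}_{\tilde\psi}$ and therefore lies in the local packet $\Pi_{\tilde\psi_{v_0}}$, and then quote the computation of infinitesimal characters for archimedean Arthur packets from \cite[Lemmes 6.3.1 \& 6.4.1]{BC}; everything else is formal. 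Note that no unitarity (Ramanujan) is used — only the Luo--Rudnick--Sarnak bound — which is why $\psi$ is merely a \emph{generalized} Arthur parameter.
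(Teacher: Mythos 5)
Your proposal is correct and follows exactly the route the paper takes: the paper gives no proof beyond the sentence ``La proposition d\'ecoule du th\'eor\`eme 30.2 de \cite{Arthur} et de \cite[Lemmes 6.3.1 \& 6.4.1]{BC},'' and you have reconstructed precisely that argument, including the definition-chase that turns $\tilde\psi_{v_0}$ into a \emph{param\`etre d'Arthur g\'en\'eralis\'e} via the Luo--Rudnick--Sarnak bound and the matching of infinitesimal characters through $\varphi_\psi$ and the lemmas from \cite{BC}. The one point you flag but leave slightly soft --- that Arthur's Th.\ 30.2 classifies the discrete spectrum, whereas the statement speaks of an arbitrary automorphic representation --- is also glossed over by the paper; the standard resolution is that a non-discrete automorphic representation is a constituent of an Eisenstein induction from the discrete spectrum of a Levi $M \cong \GL(n_1)\times\cdots\times\GL(n_k)\times \SO(m')$, to which Moeglin--Waldspurger (for the $\GL$ factors) and Arthur's theorem (for the $\SO$ factor) apply, yielding again a generalized Arthur parameter after the induction shift; since the paper leaves this implicit too, your treatment matches its level of detail.
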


\section{Stabilisation de la formule des traces} \label{1.1}

On voudrait comparer les spectres automorphes discrets des groupes $G$ et $G^*$.

Soit donc $f = \otimes_v f_v$ une fonction lisse \`a support compact, d\'ecomposable, sur $G({\mathbf A})$. On s'int\'eresse essentiellement \`a 
\begin{eqnarray} \label{61}
{\rm trace}  \ R_{\rm dis}^G (f) = {\rm trace} \left( f | L^2_{\rm dis} ( G(F) \backslash G({\mathbf A})) \right),
\end{eqnarray}
o\`u $L_{\rm dis}^2$ est la partie discr\`ete de l'espace des formes automorphes sur $G(F) \backslash G({\mathbf A})$. 
Cette trace est bien d\'efinie d'apr\`es M\"uller \cite{Muller}, mais ceci n'est pas n\'ecessaire
pour les d\'emonstrations qui suivent. En effet, ce n'est pas l'expression (\ref{61}) que l'on peut comparer
\`a son analogue pour $G^*$, mais \og la partie discr\`ete de la formule des traces pour $G$ \fg. Il s'agit 
alors de fixer un r\'eel strictement positif $t$ et, selon Arthur, de consid\'erer des expressions relatives \`a $G$ et $G^*$ portant sur des repr\'esentations dont la norme du caract\`ere infinit\'esimal est \'egale \`a 
$t$. 

Pour $t$ fix\'e, Arthur d\'efinit une distribution $f \mapsto I_{{\rm disc} , t}^G (f)$, somme de la partie de
(\ref{61}) relative \`a $t$ et de termes associ\'es \`a diverses repr\'esentations induites de sous-groupes de Levi \cite[(21.19)]{Arthur}. On reviendra plus tard sur les termes compl\'ementaires. Observons 
que l'on ne sait pas {\it a priori} montrer que la somme sur $t$ de ces distributions
converge; cela n\'ecessiterait d'\'etendre le r\'esultat de M\"uller mentionn\'e plus haut. 

\subsection{Sous-groupes elliptiques} 
Consid\'erons la famille ${\cal E}_{\rm ell} (G)$ des donn\'ees endoscopiques elliptiques pour $G$
\cite[\S 27]{Arthur}. 
Puisque $G$ est forme int\'erieure de $G^*$, ${\cal E}_{\rm ell} (G) = {\cal E}_{\rm ell} (G^*)$. Cet ensemble est d\'ecrit explicitement par Arthur \cite[\S 30]{Arthur}. 
Il faut distinguer deux cas selon la parit\'e de $m$. 

Si $m=N+1$ est impair $G^* = \SO (N+1)$ et l'ensemble $ {\cal E}_{\rm ell} (G^*)$ est param\'etr\'e par des pairs d'entiers pairs $(N' , N'')$ avec $N'' \geq N' \geq 0$ et $N = N'+N''$.
Le groupe endoscopique correspondant est le groupe d\'eploy\'e
\begin{eqnarray} \label{62a}
H = \SO (N' +1) \times \SO (N'' +1).
\end{eqnarray}
Ainsi 
$$\widehat{H} = \Sp \left( \frac{N'}{2} \right) \times \Sp \left( \frac{N''}{2} \right)$$
et ${}^L \- H= \widehat{H} \times \Gamma_F$.

Si $m=N$ est pair $G^*=\SO (N , \eta )$ et l'ensemble $ {\cal E}_{\rm ell} (G^*)$ est param\'etr\'e par des pairs d'entiers pairs $N'' \geq N' \geq 0$ tels que $N = N'+N''$, et des paires correspondantes de
de caract\`eres d'Artin $\eta'$, $\eta ''$ avec $\eta ' \eta '' = \eta$. \footnote{Si $N' = 0$, $\eta ' =1$; si 
$N'$ ou $N''$ est \'egal \`a $2$, le caract\`ere correspondant est non trivial.} Le groupe endoscopique
correspondant est le groupe quasi-d\'eploy\'e 
\begin{eqnarray} \label{62b}
H = \SO (N' , \eta ') \times \SO (N'' , \eta '' ).
\end{eqnarray}
Ainsi 
$$\widehat{H} = \SO (N' , \CC) \times \SO (N'' , \CC)$$
et ${}^L \- H= \widehat{H} \rtimes \Gamma_F$, o\`u $\Gamma_F$ op\`ere sur chaque facteur par un automorphisme d'ordre $2$, respectant un \'epinglage.

Dans tous les cas, on v\'erifie l'existence d'un morphisme naturel ${}^L \- H \rightarrow {}^L \- G$. 

\subsection{Lemmes fondamentaux et transfert} 
Les hypoth\`eses d'analyse harmonique locale de \cite[\S 5]{ArthurSTF1} -- lemmes fondamentaux standard et pond\'er\'e -- sont maintenant des th\'eor\`emes, voir Ng\^o \cite{Ngo} et Chaudouard-Laumon \cite{CL}. Elles permettent d'associer \`a $f$ une famille de fonctions $(f^H)_{H \in 
{\cal E}_{\rm ell} (G)}$ (c'est une correspondance, non une application~: 
$f^H$ n'est d\'efinie que par ses int\'egrales
orbitales stables). Si $f$ est non ramifi\'ee hors d'un ensemble fini de places $S \supset \infty$, 
il en est de m\^eme de $f^H$ (si $H$ est ramifi\'e en une place $v$ et 
$f_v$ non ramifi\'ee, alors $f_v^H$ et donc $f^H$ est nulle). 

Le groupe $G^*$ appartient \`a ${\cal E}_{\rm ell} (G)$. C'est le seul de dimension maximale. On notera $f^*$ la fonction $f^{G^*}$. 

\begin{thm}[Arthur] 
On a~:
\begin{eqnarray} \label{63}
I_{{\rm disc} , t}^G (f) = \sum_{H \in {\cal E}_{\rm ell} (G)} \iota (G,H) S_{{\rm disc} , t}^H (f^H)
\end{eqnarray}
o\`u, pour tout $H$, $S_{{\rm disc} , t}^H$ est une distribution stable.
\end{thm}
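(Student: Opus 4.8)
The statement to be proved is Arthur's stabilization of the discrete part of the trace formula: the identity
\[
I_{\mathrm{disc},t}^G(f) = \sum_{H \in \mathcal{E}_{\mathrm{ell}}(G)} \iota(G,H)\, S_{\mathrm{disc},t}^H(f^H),
\]
with each $S_{\mathrm{disc},t}^H$ a stable distribution. This is not something one reproves from scratch in this paper; it is imported from Arthur's work \cite[\S 30]{Arthur}, itself the outcome of the long stabilization program of \cite{ArthurSTF1} and its sequels. So the ``proof'' here is really an explanation of why the cited results now apply unconditionally to the groups $G$ at hand. The plan is therefore: first, record that $G$ is an inner form of the quasi-split group $G^*$ (established in Section 2), so that its set of elliptic endoscopic data $\mathcal{E}_{\mathrm{ell}}(G) = \mathcal{E}_{\mathrm{ell}}(G^*)$ is exactly the explicit list of pairs $(N',N'')$ (with Artin characters in the even case) described above, and in particular $G^*$ itself is the top member. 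Second, invoke the stabilization of the trace formula for $G$ together with the induction on $\dim G$ that underlies Arthur's comparison, which expresses $I_{\mathrm{disc},t}^G$ as a sum of stable distributions $S_{\mathrm{disc},t}^H$ attached to the endoscopic groups $H$, weighted by the usual rational constants $\iota(G,H)$.

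The second ingredient is the local input. The proof of the identity requires the standard and weighted fundamental lemmas and the associated transfer statements, i.e.\ the harmonic-analytic hypotheses of \cite[\S 5]{ArthurSTF1}. First I would observe that these are now theorems: the standard fundamental lemma is Ng\^o \cite{Ngo}, and the weighted version is Chaudouard--Laumon \cite{CL} (with the twisted avatars supplied by Waldspurger and others, as noted in the introduction). Combined with Waldspurger's reduction of transfer to the fundamental lemma, one obtains for each $f = \otimes_v f_v$ the family $(f^H)_{H \in \mathcal{E}_{\mathrm{ell}}(G)}$, defined through its stable orbital integrals, with the unramified-propagation property recorded above (if $H$ ramifies where $f_v$ is unramified, $f^H = 0$). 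With all local hypotheses discharged, Arthur's global argument of \cite[\S 30]{Arthur} runs unconditionally and yields both the displayed identity and the stability of each $S_{\mathrm{disc},t}^H$; the stability is proved simultaneously by the same induction on dimension, using that for every proper $H$ the right-hand side for $H$ has already been stabilized.

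The main obstacle — or rather the only real content to be checked on our side — is that the orthogonal groups $G$ under consideration genuinely fall within the scope of \cite{Arthur}. This is where the non-triality hypothesis enters: a special orthogonal group obtained by restriction of scalars from a totally real field is an inner form of a quasi-split group to which Arthur's endoscopic classification applies \emph{except} when $G^*$ is a triality form of $\mathrm{SO}(8)$, whose $L$-group involves the order-$3$ (or order-$6$) outer automorphism of $D_4$ and is not treated by the $\mathrm{SO}/\mathrm{Sp}$ twisted-endoscopy setup of \cite{Arthur}. Excluding those, $\widehat{G}$ is $\mathrm{Sp}(N,\mathbb{C})$ or $\mathrm{SO}(N,\mathbb{C})$ with at most an order-$2$ action of $\Gamma_F$, and ${}^L H \to {}^L G$ is the natural map checked at the end of the preceding subsection; all the structural hypotheses of Arthur's work are then visibly met. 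Once that verification is in place, the theorem is precisely \cite[Theorem 30.?]{Arthur} applied to $(G, G^*)$, and no further argument is needed.
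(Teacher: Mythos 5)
Your proposal is correct and follows essentially the same route as the paper, which simply cites Arthur's Cor.~29.10 (not a ``Theorem 30.?'') together with the definition of the coefficients $\iota(G,H)$ in \cite[\S 27]{Arthur} and the remark that applying (\ref{63}) to $G^*$ and inductively to its endoscopic groups pins down the $S^H_{\mathrm{disc},t}$ uniquely. Your added commentary — the now-unconditional fundamental lemmas (standard via Ng\^o, weighted via Chaudouard--Laumon), Waldspurger's reduction of transfer, and the check that the non-triality hypothesis keeps $G$ an inner form of an $\mathrm{SO}/\mathrm{Sp}$ quasi-split group covered by Arthur's classification — matches the surrounding discussion in the paper verbatim in spirit; the paper just places those points in the preceding subsection rather than inside the proof.
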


Voir \cite[Cor. 29.10]{Arthur}. Les coefficients $\iota (G,H)$ sont d\'efinis dans \cite[\S 27]{Arthur}
et sont des rationnels $>0$. Noter que (\ref{63}) -- appliqu\'e \`a $G^*$ plut\^ot qu'\`a $G$, et 
inductivement \`a ses sous-groupes endoscopiques -- d\'efinit, de fa\c{c}on unique, les distributions
$S_{{\rm disc} , t}^H$. 

\section{D\'estabilisation de la formule des traces} \label{1.2}

Les termes de droite de (\ref{63}), d\'efinis \`a partir du c\^ot\'e g\'eom\'etrique de la formule des traces, n'ont pas {\it a priori} d'interpr\'etation spectrale. Pour utiliser cette identit\'e, il faut donc d\'estabiliser 
son membre de droite. 

\subsection{} Supposons pour un instant que $G$ est un groupe (r\'eductif, connexe) arbitraire sur un corps de nombres. Si $G$ est {\it quasi-d\'eploy\'e}, $G$ appara\^{\i}t dans (\ref{63}) comme l'un de ses sous-groupes 
endoscopiques et l'on a l'\'egalit\'e 
\begin{eqnarray} \label{64}
I_{{\rm disc} , t}^G (f) = S_{{\rm disc} , t}^G (f) + \sum_{H \in {\cal E}^0_{\rm ell} (G)} \iota (G,H) S_{{\rm disc} , t}^H (f)
\end{eqnarray}
o\`u la somme porte sur les groupes endoscopiques propres. Cf. \cite[(29.21)]{Arthur}, ainsi que la discussion suivant le Cor. 29.10; {\it ibid}. Nous pouvons appliquer (\ref{64}) \`a chacun des groupes endoscopiques $H$ apparaissant dans (\ref{63}). 

\subsection{Transfert} Fixons un ensemble fini $S$ de places de $F$, contenant les places archim\'ediennes, et supposons que $S$ contient les places de ramification de $G$. Pour $v \notin S$, $G \times F_v$ est isomorphe
au groupe quasi-d\'eploy\'e $G^* (F_v )$ et se d\'eploie sur un extension non-ramif\'ee de $F_v$; ce groupe poss\`ede donc un sous-groupe hypersp\'ecial (voir \cite[1.10.2]{Tits}) que l'on 
note $K_v$. Soit ${\cal H}_v$ l'alg\`ebre de Hecke correspondante.

On supposera que $f = f_S \otimes f^S$, o\`u $f^S = \otimes_{v \notin S} f_v$ et $f_v \in {\cal H}_v$
pour tout $v \notin S$. 

Si $v \notin S$, la correspondance $f_v \leadsto f_v^H$ est une application, d\'ecrite explicitement,
de ${\cal H}_v$ vers l'alg\`ebre de Hecke de $(H(F_v) , K_v^H)$ o\`u $K_v^H$ est un sous-groupe
hypersp\'ecial de $H(F_v)$, qui est non-ramifi\'e si $f^H \neq 0$. Les groupes $H$ ont \'et\'e d\'ecrits
dans le paragraphe pr\'ec\'edents, et d\'ependent, puisque $\eta '' = \eta ' \eta$, d'un caract\`ere d'Artin
quadratique $\eta '$. Ce caract\`ere \'etant non ramifi\'e hors $S$, ceci ne laisse qu'un nombre fini de
possibilit\'es pour $\eta '$ et donc pour les groupes $H$.

En la place r\'eelle $v_0$, on dispose des r\'esultats de Shelstad \cite{Shelstad} sur le 
transfert $f_{v_0} \mapsto f_{v_0}^H$ pour les fonctions dans les espace de Schwartz-Harish-Chandra.
Il d\'ecoule de \cite[Appendice, Th\'eor\`eme A.3]{ClozelDelorme} que les propri\'et\'es de support 
compact et de $K$-finitude peuvent aussi \^etre pr\'eserv\'ees. L'application de transfert n'a pas d'importance pour nous, seuls comptent son existence et le fait, que nous expliquons dans le paragraphe suivant, que {\it le transfert est compatible aux multiplicateurs d'Arthur}.

\subsection{Multiplicateurs d'Arthur} Rappelons que le centre ${\cal Z}$ de l'alg\`ebre enveloppante de $\mathfrak{g}$ s'identifie
\`a $S(\mathfrak{t}_{\CC})^W$. Toute repr\'esentation irr\'eductible admissible $\pi$
de $G_{v_0}$ a un caract\`ere infinit\'esimal que l'on voit comme un \'el\'ement 
$\nu_{\pi} \in \mathfrak{t}_{\CC}^* /W$. L'alg\`ebre ${\cal Z}$ agit sur ${\cal H}_{v_0}$ -- l'alg\`ebre des fonctions $K$-finies et \`a support compact sur $G_{v_0}$. Arthur montre plus g\'en\'eralement -- voir \cite[Theorem 4.2]{A9} ou encore 
\cite[Theorem 20.4]{Arthur} -- qu'il existe une action canonique de l'alg\`ebre 
${\cal E} (\mathfrak{t}_s )^W$ sur ${\cal H}_{v_0}$. Ici ${\cal E} (\mathfrak{t}_s )^W$ d\'esigne 
l'alg\`ebre des {\it multiplicateurs} -- distributions $W$-invariantes et \`a support compact sur 
$\mathfrak{t}_s$ munis du produit de convolution. Arthur montre en outre que pour toute 
repr\'esentation irr\'eductible admissible $\pi$ de $G_{v_0}$,
\begin{eqnarray} \label{Amult}
\pi (\alpha \cdot f) = \widehat{\alpha} (\nu_{\pi}) \pi (f) , \ \ \ \alpha \in 
{\cal E} (\mathfrak{t}_s )^W , \ f \in {\cal H}_{v_0} ,
\end{eqnarray}
o\`u $\alpha \mapsto \widehat{\alpha}$ est la transform\'ee de Fourier qui est en particulier une fonction
holomorphe $W$-invariante sur $\mathfrak{t}_{\CC}^*$. 

\subsection{} Revenons maintenant au transfert. Soit $\varphi_H : W_{\RR} \rightarrow {}^L \- H$ un param\`etre
de Langlands r\'eel pour le groupe endoscopique $H$ et $\varphi : W_{\RR} \rightarrow {}^L \- G$
le param\`etre de Langlands r\'eel obtenu en composant $\varphi_H$ par le morphisme naturel
${}^L \- H \rightarrow {}^L \- G$.  Notons $\Pi_H$ et $\Pi$ les $L$-paquets correspondants 
de repr\'esentations admissibles de $H_{v_0}$ et de $G_{v_0}$. 

On peut identifier le tore maximal $\widehat{T}_H$ de $\widehat{H}$ au tore
maximal $$\widehat{T} = \{ {\rm diag} (x_1 , \ldots , x_{\ell} , x_{\ell}^{-1} , \ldots , x_1^{-1}) \}$$ de 
$\widehat{G}$ ($=\Sp (2\ell , {\Bbb C})$ ou $\SO (2\ell , \CC)$). 
On en d\'eduit un isomorphisme entre les
tores maximaux $T_H$ et $T$ de $H$ et $G^*$. Par ailleurs $G$ et $G^*$ ont la m\^eme 
complexification, et leurs sous-alg\`ebres de Cartan complexifi\'ees s'identifient donc 
canoniquement (modulo l'action du groupe de Weyl $W=W(G^* , T)$). Cet isomorphisme permet de 
r\'ealiser le groupe de Weyl $W_H := W (H, T_H)$ comme sous-groupe du groupe de Weyl $W$ 
et induit un isomorphisme 
$$\mathfrak{t}_{H, \CC}^* / W_H \rightarrow \mathfrak{t}_{\CC}^* /W.$$ 
Les param\`etres 
$(\varphi_H) _{\CC^{*}} : \CC^{\times} \rightarrow \widehat{H}$ et $(\varphi)_{\CC^{\times}} : \CC^{\times} \rightarrow \widehat{G}$
\'etant semi-simples, leurs images sont alors (\`a conjugaison pr\`es) contenues dans $\widehat{T}$. 
Il r\'esulte de \cite[Lemmes 6.3.1 \& 6.4.1]{BC} que les param\`etres diagonaux associ\'es \`a 
$\varphi_H : \CC^{\times} \rightarrow \GL(N , \CC)$ et $\varphi : \CC^{\times} \rightarrow \GL (N , \CC)$ sont respectivement 
$(\nu_{\Pi_H} , -\nu_{\Pi_H})$ et $(\nu_{\Pi} , - \nu_{\Pi})$ (modulo $\mathfrak{S}_N$), o\`u 
$\nu_{\Pi_H}$ et $\nu_{\Pi} \in \CC^{\ell}$ sont les 
caract\`eres infinit\'esimaux respectifs des membres des $L$-paquets $\Pi_H$ et $\Pi$. Nous
dirons que les caract\`eres infinit\'esimaux $\nu_{\Pi}$ et $\nu_{\Pi_H}$ sont 
{\it associ\'es par fonctorialit\'e}. De la m\^eme mani\`ere on peut parler de 
multiplicateurs $\alpha_H \in {\cal E} (\mathfrak{t}_{H,s} )^{W_H}$ et $\alpha \in {\cal E} (\mathfrak{t}_s )^W$ associ\'es par fonctorialit\'e; cela revient \`a demander que
$$\widehat{\alpha} (\nu) = \widehat{\alpha}_H (\nu_H)$$
pour tout couple $(\nu , \nu_H)$ de caract\`eres infinit\'esimaux associ\'es par fonctorialit\'e.

Dire que le transfert est compatible aux multiplicateurs d'Arthur revient alors \`a dire que si $\alpha$
et $\alpha_H$ sont deux multiplicateurs associ\'es par fonctorialit\'e alors
\begin{eqnarray} \label{multComp}
{\rm trace} \  \Pi_H (\alpha_H \cdot f^H ) = \widehat{\alpha} (\nu_{\Pi}) \ {\rm trace}  \ \Pi_H (f^H ),
\end{eqnarray}
pour tout couple de $L$-paquets $(\Pi , \Pi_H)$ se correspondant par la fonctorialit\'e ${}^L H \rightarrow {}^L \- G$.

\subsection{D\'estabilisation}
Appliquons maintenant (\ref{64}) \`a chacun des termes de (\ref{63}). Par r\'ecurrence, on voit que 
$S_{{\rm disc}, t}^H (f^H)$ s'\'ecrit comme combinaison lin\'eaire des termes $I_{{\rm disc} , t}^J (f^J)$ 
o\`u les groupes $J$ sont des groupes endoscopiques it\'er\'es des groupes $H$.

Soit $H$ un groupe endoscopique associ\'e \`a une partition $N=N'+N''$, voir (\ref{62a}) ou (\ref{62b}).
Un groupe endoscopique pour $H$ se d\'ecompose en produit de groupes endoscopiques pour chacun
des facteurs et ceux-ci sont d\'ecrits au paragraphe pr\'ec\'edent. Si $H'$ est par exemple le 
premier facteur de (\ref{62a}), resp. (\ref{62b}), tout sous-groupe endoscopique elliptique $H_1$ pour
$H'$ est de la forme
$$H_1 = \SO (N_1 +1) \times \SO (N_2+1), \ \ \ {\rm resp.} \ H_1 = \SO (N_1 , \eta_1 ) \times \SO (N_2 , \eta_2),$$
o\`u $N_1$, $N_2$ sont pairs positifs et de somme $N'$ et $\eta_1$, $\eta_2$ sont des caract\`eres 
d'Artin d'ordre $2$ avec $\eta_1 \eta_2 = \eta '$. Comme dans le paragraphe pr\'ec\'edent, on a un 
morphisme naturel ${}^L \- H_1 \rightarrow {}^L \- H'$.

Apr\`es it\'eration (finissant par des groupes $\SO(3)$, resp. $\SO(2)$, qui n'ont pas de sous-groupes endoscopiques propres), on voit que les groupes endoscopiques it\'er\'es sont, dans le premier cas, 
des produits de groupes $\SO (N_i +1)$ et, dans le second cas, des produits de groupes $\SO(N_i ,
\eta_i)$ o\`u les $N_i$ sont des entiers pairs qui forment une partition de $N$ et les $\eta_i$ sont
des caract\`eres d'Artin d'ordre $2$ qui v\'erifient $\eta_1 \cdots \eta_r =1$. (Ce ne sont des groupes
endoscopiques pour $G$ que s'il y a deux facteurs.) Dans tous les cas, l'argument de ramification 
pr\'ec\'edent reste valide~: ils sont en nombre fini. Un groupe endoscopique it\'er\'e $J$ est d\'efini
par une it\'eration~:
\begin{eqnarray} \label{67}
(G, H_1 , \ldots , H_r=J) = {\cal J}
\end{eqnarray}
et l'application $f\mapsto f^{H_1} \mapsto (f^{H_1})^{H_2} \mapsto \cdots \mapsto f^J$ --
en fait, une {\it correspondance} entre fonctions lisses -- n'est pas \'evidemment ind\'ependante 
de la suite $(G , \ldots , J)$. On conviendra donc qu'un groupe endoscopique it\'er\'e est, non 
le groupe $J$, mais la suite ${\cal J}$ de (\ref{67}), et on note $f^{\cal J}$ le terme final.

Au terme de cette proc\'edure, on obtient dans tous les cas une expression finie~:
\begin{eqnarray} \label{68}
I_{{\rm disc} , t}^G (f) = I_{{\rm disc} , t}^{G^*} (f^* ) + \sum_{{\cal J}} \eta (G , {\cal J}) I_{{\rm disc} , t}^J (f^{\cal J})
\end{eqnarray}
o\`u les $\eta (G,{\cal J})$ sont des rationnels, peut-\^etre n\'egatifs ou nuls.

\subsection{} Supposons pour l'instant $G$ (r\'eductif connexe sur $F$) arbitraire. Il est temps de d\'ecrire le terme
$I_{{\rm disc} , t}^G (f)$. Il est d\'efini par Arthur dans \cite[\S 4]{ITF2} -- voir la formule (4.3)~-- ainsi
que dans \cite[(21.19)]{Arthur}~:
\begin{eqnarray} \label{69}
\nonumber I_{{\rm disc} , t}^G (f) & = & {\rm trace} \left( f | L^2_{{\rm disc} , t} (G(F) \backslash G( {\mathbf A}) ) \right) \\
& & + \sum_M \frac{|W_0^M |}{|W_0^G |} \sum_{s \in W(M)_{\rm reg}} |\det (s-1)|^{-1} {\rm trace} \left(
M_P (s,0) I_{P,t} (0,f) \right).
\end{eqnarray}
Le terme correctif porte sur les sous-groupes de Levi standard, propres, de $G$; l'op\'erateur $I_{P,t} (0,f)$
est d\'efini par l'action de $f$ dans une repr\'esentation de $G({\mathbf A})$ unitairement induite \`a partir d'une
somme finie (pour $f_{\infty}$ $K_{\infty}$-finie) de repr\'esentations du spectre discret de $M({\mathbf A})$.
Les d\'efinitions des autres termes n'ont pas d'importance pour nous, cf. \cite{ITF2,Arthur}. Qu'il nous 
suffise de dire que $M_P (s,0)$ est un op\'erateur d'entrelacement de la repr\'esentation induite. Nous 
appliquerons (\ref{69}) \`a chaque terme de (\ref{68}).

\section{Caract\`eres infinit\'esimaux des repr\'esentations automorphes de $G$}

Compte tenu de la convergence absolue des expressions dans (\ref{68}), 
la compatibilit\'e du transfert archim\'edien $K$-fini aux multiplicateurs d'Arthur (voir 
\cite[\S 2.15]{AC} ou encore \cite[Prop. 3.5.4]{Labesse}) permet de raffiner l'identit\'e (\ref{68}) en 
s\'eparant les caract\`eres infinit\'esimaux. Ceci fournit une identit\'e entre traces de repr\'esentations dont les caract\`eres infinit\'esimaux sont fix\'es et associ\'es par fonctorialit\'e; 
les sommes apparaissant dans (\ref{68}) sont alors finies. On a ainsi ramen\'e l'\'etude des caract\`eres
infinit\'esimaux de repr\'esentations automorphes de $G$ \`a l'analogue pour ses sous-groupes endoscopiques. Ceux-ci 
sont des produits de groupes quasi-d\'eploy\'es et les r\'esultats d'Arthur \cite[\S 30]{Arthur} que nous avons
rappel\'es au chapitre \ref{qs} s'appliquent. 

Fixons un groupe endoscopique it\'er\'e ${\cal J}$. Puisque $J$ est un produit de groupes quasi-d\'eploy\'es, 
il d\'ecoule du chapitre \ref{qs} que si $\pi$ est une repr\'esentation automorphe de $J$, il existe un param\`etre d'Arthur g\'en\'eralis\'e (ainsi d\'efini dans l'introduction du chapitre 2) $\psi$ tel que le caract\`ere infinit\'esimal de $\pi_{v_0}$ est \'egal \`a $\nu_{\psi}$.

Le paragraphe pr\'ec\'edent et la s\'eparation des caract\`eres infinit\'esimaux impliquent donc~:

\begin{thm} \label{C}
Si une repr\'esentation irr\'eductible $\pi$ de $G_{v_0}$ appara\^{\i}t (faiblement) dans $L^2 (\Gamma 
\backslash G)$ pour un sous-groupe de congruence $\Gamma$, 
il existe un param\`etre d'Arthur g\'en\'eralis\'e $\psi$ tel que le
caract\`ere infinit\'esimal de $\pi$ soit \'egal \`a $\nu_{\psi}$.
\end{thm}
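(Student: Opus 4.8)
The plan is to combine the stabilisation identity \eqref{63} with the destabilisation procedure of the previous two chapters, then invoke the separation of infinitesimal characters via Arthur's multipliers to reduce the statement for $G$ to the already-established case of the iterated endoscopic groups $J$, which are products of quasi-split orthogonal groups. Concretely, suppose $\pi = \otimes_v \pi_v$ is an irreducible representation of $G_{v_0}$ appearing weakly in $L^2(\Gamma\backslash G)$ for a congruence subgroup $\Gamma$. After choosing a suitable finite set $S$ of places containing $\infty$ and all ramification of $G$ and of $\Gamma$, we may assume that the datum of $\pi$ together with its level is captured by a function $f = f_S\otimes f^S$ with $f^S = \otimes_{v\notin S} f_v$, $f_v\in\mathcal{H}_v$, and $f_{v_0}$ chosen $K_{v_0}$-finite of compact support so that $\mathrm{trace}\,\pi_{v_0}(f_{v_0})\neq 0$; the point is that $\pi$ contributes to $\mathrm{trace}\,R^G_{\mathrm{dis}}(f)$, hence, for $t = \|\nu_\pi\|$, to the leading term of $I^G_{\mathrm{disc},t}(f)$ in \eqref{69}.

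Next I would run the destabilisation argument of chapter \ref{1.2}: apply \eqref{64} repeatedly, chasing through the finitely many iterated endoscopic data $\mathcal{J}$ described there, to obtain the finite expansion \eqref{68}, and then expand each $I^J_{\mathrm{disc},t}(f^{\mathcal J})$ by \eqref{69}. Now comes the key step. Because the sums in \eqref{68} are finite and everything converges, and because the archimedean $K_{v_0}$-finite transfer is compatible with Arthur's multipliers --- i.e. \eqref{multComp} holds, so that acting by $\alpha\in\mathcal{E}(\mathfrak{t}_s)^W$ on $f$ at $v_0$ multiplies each spectral contribution by $\widehat\alpha$ evaluated at the relevant infinitesimal character, and the infinitesimal characters on the two sides are associated by functoriality --- I can separate infinitesimal characters on both sides of \eqref{68}. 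This yields, for the fixed conjugacy class $\nu_\pi\in\mathfrak{t}_{\CC}^*/W$, an identity whose left-hand side still contains the nonzero contribution of $\pi$, and whose right-hand side is a finite $\mathbb{Q}$-linear combination of terms built from discrete-spectrum (and properly-induced) representations of the groups $J$ whose infinitesimal character at $v_0$ is associated to $\nu_\pi$ by functoriality.

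Since the left-hand contribution of $\pi$ is nonzero, at least one term on the right must be nonzero; hence there is an iterated endoscopic group $\mathcal{J}$ and a representation $\sigma$ occurring in $L^2_{\mathrm{disc}}(J)$ (or in a representation parabolically induced from the discrete spectrum of a Levi of $J$ --- but such a Levi is again a product of $\mathrm{GL}$'s and smaller quasi-split orthogonal groups, so the same conclusion applies, the infinitesimal character of an induced representation being read off from those of the inducing data) whose infinitesimal character at $v_0$ is associated by functoriality to $\nu_\pi$. By the discussion at the start of chapter \ref{qs} --- i.e. Arthur's Theorem 30.2 of \cite{Arthur} together with \cite[Lemmes 6.3.1 \& 6.4.1]{BC}, applied to each quasi-split factor of $J$ --- there is a generalized Arthur parameter $\psi_J$ with infinitesimal character $\nu_{\psi_J}$ equal to that of $\sigma_{v_0}$. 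Composing $\psi_J$ with the natural $L$-morphism ${}^L H_1\to{}^L G$ appearing in the chain \eqref{67} (iteratively up to ${}^L G$), and using that, by construction, the diagonal parameters match as $(\nu,-\nu)$ modulo $\mathfrak{S}_N$ under these $L$-morphisms (again \cite[Lemmes 6.3.1 \& 6.4.1]{BC}), produces a generalized Arthur parameter $\psi$ for $G$ with $\nu_\psi = \nu_\pi$, which is the assertion.

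The main obstacle I anticipate is not any single deep input --- the fundamental lemmas, Arthur's stabilisation, and \cite[\S 30]{Arthur} are all invoked as black boxes --- but the careful bookkeeping needed to make the separation of infinitesimal characters rigorous: one must ensure that the multiplier algebra $\mathcal{E}(\mathfrak{t}_s)^W$ acts coherently and compatibly across \emph{all} the groups $J$ in the finite family simultaneously (via the functorial identifications $\mathfrak{t}_{H,\CC}^*/W_H\to\mathfrak{t}_{\CC}^*/W$ of chapter \ref{1.2}), that the finiteness of the expansion \eqref{68} genuinely survives after fixing $t$ and then fixing $\nu_\pi$, and that the ``weak'' containment of $\pi$ (rather than honest occurrence) still forces a nonzero trace against a suitable $f_{v_0}$. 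Once these points are handled as in \cite[\S 2.15]{AC} and \cite[Prop. 3.5.4]{Labesse}, the theorem follows.
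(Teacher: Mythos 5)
Your proposal follows the same route as the paper: stabilise via \eqref{63}, destabilise through iterated endoscopic data to reach \eqref{68}, separate infinitesimal characters using Arthur's multipliers (justified by \eqref{multComp} together with \cite[\S 2.15]{AC} and \cite[Prop.~3.5.4]{Labesse}), reduce to the quasi-split groups $J$, and invoke the results of chapter~\ref{qs}. The extra precision you add --- choosing $f$ so $\pi$ contributes nontrivially, explicitly composing the parameter $\psi_J$ with the $L$-morphisms of the chain \eqref{67} and checking the bounds on $|{\rm Re}(p+q)|$ are preserved --- is exactly what the paper leaves implicit, and the subtleties you flag at the end (coherence of the multiplier action across the family of $J$'s, finiteness after fixing $t$ and $\nu_\pi$, and the passage from weak containment to a nonvanishing trace, which is where a linear-independence-of-characters argument and the Levi terms of \eqref{69} enter) are precisely the points the paper treats as routine and refers to the cited sources for. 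No genuine gap; this is the paper's argument, spelled out more fully.
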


On peut tirer de ce th\'eor\`eme des cons\'equences similaires \`a celles que l'on tire de la 
conjecture 6.1.2 dans \cite[\S 6.3 et 6.4]{BC}. Supposons donc $p=n$ et $q=1$. Notons 
$M = {\rm S}({\rm O}(n-1) \times {\rm O}(1,1)) \subset G_{v_0}$ le sous-groupe qui -- dans notre r\'ealisation de $G_{v_0} = \SO (n,1)$ -- stabilise la d\'ecomposition $\RR^{n+1} = \RR^{n-1} \oplus \RR^2$. 
Sa composante neutre ${}^0 \- M \cong \SO (n-1)$ est compacte. D'apr\`es la classification de 
Langlands, une repr\'esentation admissible irr\'eductible $\pi$ de $G_{v_0}$ est soit un membre de 
la s\'erie discr\`ete de $G$ -- auquel cas $n$ est pair -- soit un sous-quotient irr\'eductible $J(\tau , s)$
d'une induite (non n\'ecessairement unitaire) $I(\tau , s)$ o\`u $\tau \in \widehat{{}^0 \- M}$ et ${\rm Re} (s) \geq 0$ (voir \cite[\S 6.3 \& 6.4]{BC} pour plus de d\'etails).

\begin{prop} \label{PA}
Toute repr\'esentation unitaire $\pi$ de $G_{v_0}$ dont le caract\`ere infinit\'esimal est associ\'e 
\`a un param\`etre d'Arthur g\'en\'eralis\'e appartient \`a la s\'erie discr\`ete de $G_{v_0}$, ou est de la forme $J(\tau ,s)$
avec 
\begin{enumerate}
\item $|{\rm Re} (s) | < \frac12 - \frac{1}{N^2 +1}$, ou 
\item $s\in \frac{n-1}{2} + \ZZ$.
\end{enumerate}
\end{prop}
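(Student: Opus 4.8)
The strategy is to combine the classification of admissible unitary representations of $G_{v_0}=\SO(n,1)$ with the constraints imposed on infinitesimal characters by generalized Arthur parameters. By Theorem~\ref{C}, $\pi$ has infinitesimal character $\nu_\psi$ for some generalized Arthur parameter $\psi$, and by the Langlands classification recalled just before the statement, either $\pi$ is in the discrete series (and there is nothing more to prove) or $\pi=J(\tau,s)$ is the Langlands quotient of an induced representation $I(\tau,s)$ with $\tau\in\widehat{{}^0\!M}$ and $\operatorname{Re}(s)\ge 0$. In the latter case I would first compute the infinitesimal character of $J(\tau,s)$ explicitly in the coordinates $\mathfrak{t}_\C\cong\C^\ell$ fixed in Section~\ref{1.1}: since ${}^0\!M\cong\SO(n-1)$ is compact, $\tau$ contributes a regular integral (or half-integral, depending on the parity of $n$) ``$\rho$-shifted'' highest weight, and $s$ enters as the single extra coordinate governing the non-compact direction. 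Concretely, $\nu_\pi$ has the shape $(a_1,\dots,a_{\ell-1},s)$ (up to $W$) where the $a_i$ are fixed half-integers determined by $\tau$ and the infinitesimal character of the trivial representation.

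\textbf{Matching against the parameter.} The heart of the argument is then to match this vector against $\nu_\psi=(P_1,\dots,P_\ell)$ coming from some $\psi=\bigoplus_j\varphi_j\otimes r_j$. Recall from the definition of a generalized Arthur parameter that the diagonal entries of $\varphi_\psi$ are characters $z^{P_i}\bar z^{Q_i}$ built from the cuspidal data $\mu_j$ (each contributing characters $z^p\bar z^q$ with $|\operatorname{Re}(p+q)|<1-\tfrac{2}{m_j^2+1}$, hence $<1-\tfrac{2}{N^2+1}$ since $m_j\le N$) twisted by the $\SL(2,\C)$-factor $r_j$, which shifts exponents by half-integers in the range $\{-(n_j-1)/2,\dots,(n_j-1)/2\}$. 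So every coordinate $P_i$ of $\nu_\psi$ is of the form $(\text{integer or half-integer})+\omega$ where $\omega$ is the real part of a cuspidal exponent, $|\operatorname{Re}(\omega)|<\tfrac12-\tfrac{1}{N^2+1}$. Now compare with the coordinate $s$ appearing in $\nu_\pi$. Since all the other coordinates of $\nu_\pi$ are forced (they are the fixed half-integers $a_i$ coming from $\tau$ and must, after applying a suitable Weyl group element, coincide with the ``integral or half-integral part'' pattern of $\nu_\psi$), the coordinate $s$ must itself equal $(\text{integer or half-integer matching the }\tfrac{n-1}{2}+\Z\text{ pattern})+\omega$ for some cuspidal exponent $\omega$. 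This yields the dichotomy: either the half-integral part of $s$ matches exactly and $s=\tfrac{n-1}{2}+\text{(integer)}$ up to the exponent being zero — giving case (2) when $\omega=0$ — or $s$ lies within distance $\tfrac12-\tfrac{1}{N^2+1}$ of such a lattice point, and since $\operatorname{Re}(s)\ge0$ the relevant possibility near $0$ forces $|\operatorname{Re}(s)|<\tfrac12-\tfrac{1}{N^2+1}$, which is case (1).

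\textbf{Bookkeeping and the main obstacle.} The routine but careful part is the bookkeeping of $\rho$-shifts and the precise identification of which half-integer lattice the coordinates of $\nu_\psi$ live in, as a function of the parity of $m$ and of $n$; this is exactly what Lemmas 6.3.1 and 6.4.1 of \cite{BC} are designed to handle, so I would invoke them to translate between the $\GL(N)$-side diagonal parameter $(\nu_\psi,-\nu_\psi)$ and the $\SO(n,1)$-side infinitesimal character. The main obstacle — and the only genuinely delicate point — is ensuring that when one coordinate of $\nu_\pi$ is pinned to a cuspidal exponent $\omega$ with $\omega\neq0$, it cannot ``hide'' inside one of the $\SL(2,\C)$-blocks in a way that relaxes the bound: one must check that a non-tempered (i.e. $n_j\ge2$) block contributing to that coordinate still only shifts by a half-integer and so the real part is still controlled by $|\operatorname{Re}(\omega)|<\tfrac12-\tfrac{1}{N^2+1}$, never worse. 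Granting the Luo–Rudnick–Sarnak bound as extended in \cite[Chapitre 7]{BC} (already built into the definition of generalized Arthur parameter), this follows, and the two cases of the Proposition are precisely the two ways the single free coordinate $s$ can be accommodated. Finally, the discrete-series alternative occurs only when $n$ is even, consistent with the statement, and needs no further argument since then $\pi$ is already on the list.
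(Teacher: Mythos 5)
Your broad strategy is the same as the paper's: write $\nu_\pi = (\lambda_\tau,s)$ in the $\mathfrak{t}_{\CC}\cong\CC^\ell$ coordinates, match against $\nu_\psi$, and invoke the Luo--Rudnick--Sarnak bound. But there is a genuine gap at the step where you extract the dichotomy. You correctly note that each coordinate of $\nu_\psi$ has the shape $(\text{half-integer})+\omega$ with $|\mathrm{Re}\,\omega| < \tfrac12-\tfrac1{N^2+1}$ (this is $p = \tfrac{p-q}{2}+\tfrac{p+q}{2}$ plus the $\SL(2)$-shift), so $\mathrm{Re}(s)$ lies within $\tfrac12-\tfrac1{N^2+1}$ of \emph{some} point of $\tfrac12\ZZ$. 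That is not what the proposition asserts: it asserts $|\mathrm{Re}(s)| < \tfrac12-\tfrac1{N^2+1}$, i.e.\ that lattice point must be $0$. Nothing in your argument forces it to be $0$ --- a priori $s$ could be close to $\tfrac12$, or to $1$, etc., and $\mathrm{Re}(s)\geq 0$ does not single out $0$. The clause ``since $\mathrm{Re}(s)\geq0$ the relevant possibility near $0$ forces...'' is a non-sequitur. Likewise, your conclusion that $\omega=0$ yields $s\in\tfrac{n-1}{2}+\ZZ$ ignores the other coset $\tfrac n2+\ZZ\subset\tfrac12\ZZ$, which is not covered by case (2) and must be dealt with separately.

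The paper closes this gap by exploiting $\nu_\psi=\nu_\pi$ much more rigidly. Since $\lambda_\tau$ has all $\ell-1$ entries in $\tfrac{n-1}{2}+\ZZ$, the coordinate $s$ is the \emph{unique} coordinate of $\nu_\pi$ outside that coset (when $s\notin\tfrac{n-1}{2}+\ZZ$). This forces: (a) the $\SL(2)$-block $r_j$ carrying $s$ to be trivial, $n_j=1$, hence $s=p$ exactly with no half-integer shift to worry about; and (b) $\chi$ and $\chi^{-1}$ to be the \emph{only} characters of $\psi$ with exponent outside the coset, so closure of the archimedean parameter under $\sigma$ (with $\chi^\sigma(z)=\chi(\bar z)$) forces $\chi^\sigma\in\{\chi,\chi^{-1}\}$, i.e.\ $\chi=(z\bar z)^p$ or $\chi=(z/\bar z)^p$. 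In the first case LRS gives $|\mathrm{Re}(s)|<\tfrac12-\tfrac1{N^2+1}$. The second case is the genuinely delicate one --- and here your proposal's ``main obstacle'' paragraph is wrong: for $\chi=(z/\bar z)^p$ one has $p+q=0$, so the LRS bound $|\mathrm{Re}(p+q)|<1-\tfrac{2}{N^2+1}$ is vacuous and imposes \emph{no} bound at all on $p=s$. Asserting that this case ``follows'' from LRS is precisely where the proof breaks down. The paper instead rules it out when $s\notin\tfrac12\ZZ$ by integrality ($p-q=2p$ must lie in $\ZZ$), and when $s\in\tfrac n2+\ZZ$ by a separate contradiction argument cited from \cite[p.\ 78--80, 82--83]{BC}. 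Without the dichotomy $\chi^\sigma=\chi$ versus $\chi^\sigma=\chi^{-1}$ and without an input beyond LRS to exclude $(z/\bar z)^p$, the proposition does not follow.
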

\begin{proof}[D\'emonstration] Le caract\`ere infinit\'esimal de la repr\'esentation $\pi= J(\tau ,s)$ est
$$\nu_{\pi} = (\lambda_{\tau} , s ) \in \CC^{\ell}$$
o\`u $\lambda_{\tau}$ -- le caract\`ere infinit\'esimal de $\tau$ -- est une suite strictement croissante 
d'\'el\'ements dans $(n-1)/2 + \ZZ$. Supposons $\nu_{\pi}$ associ\'e \`a un param\`etre d'Arthur 
g\'en\'eralis\'e $\psi : \CC^{\times} \times \SL (2 , \CC) \rightarrow \widehat{G}$. Montrons d'abord que si 
$s \notin \frac12 \ZZ$ alors $|{\rm Re} (s) | < \frac12 - \frac{1}{N^2 +1}$.

Supposons en effet $s \notin \frac12 \ZZ$ et \'ecrivons 
$\psi = \oplus_j \varphi_j \otimes r_j$
comme dans le chapitre \ref{qs}. Puisque $\nu_{\pi} = \nu_{\psi}$ il existe un indice $j$, un entier $k$
compris entre $1$ et $n_j$ (la dimension de $r_j$)  et un caract\`ere $\chi = z^p \overline{z} \- {}^q$
apparaissant dans $\varphi_j$ tels que 
$$s= p + \frac{n_j+1-2k}{2}.$$
Donc $p \notin \frac12 \ZZ$. Par ailleurs la repr\'esentation $\chi \otimes r_j$ de $\CC^{\times} \times \SL (2 , \CC)$
appara\^{\i}t avec sa duale $\chi^{-1} \otimes r_j$ et puisque $p\neq 0$, $\chi \neq \chi^{-1}$. Si 
$n_j >1$ le caract\`ere infinit\'esimal $\nu_{\psi}$ contient au moins deux coordonn\'ees $\notin \frac12 \ZZ$, contrairement \`a $\nu_{\pi}$. Donc $n_j =1$ et $s=p$. 

Remarquons maintenant que $\chi$ et $\chi^{-1}$ apparaissent tous deux dans 
$\varphi_j$ avec $|{\rm Re} (p+q) | < 1 - 2/(m_j^2 +1) \leq 1 - 2/ (N^2+1)$. Les caract\`eres $\chi$ et $\chi^{-1}$ sont les seuls ayant un exposant $\notin \frac12 \ZZ$. Ceci force $\chi^{\sigma} = \chi$ ou $\chi^{-1}$ soit 
$\chi = (z \overline{z})^p$ ou $(z/\overline{z} )^p$. Le second cas 
n'est pas possible puisque $p=s \notin \frac12 \ZZ$.
Seule reste comme possibilit\'e $\chi = (z \overline{z})^s$ avec $|{\rm Re} (s) | < \frac12 - \frac{1}{N^2 +1}$.

Nous pouvons maintenant supposer que $s \in \frac{n}{2} + \ZZ$. L'argument pr\'ec\'edent montre toujours
que $n_j = 1$ et $s=p$ comme au-dessus. Supposons $|{\rm Re} ( s) | > 1 - 2/ (N^2+1)$. Alors l'argument pr\'ec\'edent montre de plus que $\chi = 
(z/\overline{z} )^p$. Vue comme repr\'esentation de $\CC^{\times} \times \SL (2, \CC)$, on a donc
$$\psi = \chi \oplus \chi^{-1} \oplus \sum_i \chi_i \otimes r_i , $$
o\`u chaque $\chi_i = z^{p_i} \overline{z} \- {}^{q_i}$ avec $p_i + \frac{n_i+1}{2} \in \ZZ$. 
Les d\'emonstrations \cite[p. 78-80 \& p. 82-83]{BC} impliquent finalement une contradiction.
\end{proof}

\subsection{} Pour tout $\tau$ fix\'e et dans le cas (2) de la proposition, la longueur du param\`etre 
$s \in (n-1)/2 + \ZZ$ est contr\^ol\'ee par la description, connue, des s\'eries compl\'ementaires. 
Pour $0 \leq k \leq n-1$, notons $\tau_k$ la repr\'esentation standard de  ${}^0 \- M=\SO (n-1)$ sur 
$\wedge^k \CC^{n-1}$. Les repr\'esentations $\tau_k$ et $\tau_{n-1-k}$ sont \'equivalentes, 
$\tau_k$ est irr\'eductible pour $k \neq (n-1)/2$, et $\tau_{(n-1)/2} = \tau_{(n-1)/2}^+ \oplus 
\tau_{(n-1)/2}^-$ somme directe de deux repr\'esentations irr\'eductibles.
Ces repr\'esentations sont les seules repr\'esentations ${}^0 \- M$ 
apparaissant dans $\wedge^* \mathfrak{p}_{\CC}$, o\`u $\mathfrak{p}_{\CC} = \CC^n$. 
On a plus pr\'ecisemment les d\'ecompositions~:
$$\left( \wedge^k \CC^n \right)_{| \SO (n-1)} = \left\{
\begin{array}{ll}
\tau_k \oplus \tau_{k-1} & \mbox{ si } 1 \leq k < \frac{n-1}{2} , \\
\tau_k^+ \oplus \tau_k^- \oplus \tau_{k-1} & \mbox{ si } k = \frac{n-1}{2} , \\
\tau_{k} \oplus \tau_k & \mbox{ si } k = \frac{n}{2} . 
\end{array} \right. $$
{\it Via} la formule de Matsushima les repr\'esentations $J(\tau_k^{\pm} , s)$ correspondent aux 
$k$-formes diff\'erentielles coferm\'ees. On a par ailleurs 
$$
\begin{array}{ll}
\lambda_{\tau_k} = \left( \frac{n-1}{2}  , \ldots , \widehat{\frac{n-1}{2}-k} , \ldots , \frac{n-1}{2} -\ell \right)  & \mbox{ si } k \neq \frac{n-1}{2} , \\
\lambda_{\tau_{k}^{\pm}} = \left( \frac{n-1}{2}  , \ldots ,  2 , \pm 1\right) & \mbox{ si } k = \frac{n-1}{2}. 
\end{array}
$$
La repr\'esentation $J(\tau_k , s)$ est unitaire si $s \in i \RR$ ou $s \in \RR$ et $|s |\leq (n-1)/2 -k$; pour
les repr\'esentations d'Arthur  -- consid\'er\'ees dans la proposition \ref{PA} -- on a donc $s \in i \RR$ 
ou $s \in \RR$ avec soit $|s| < 1/2 - 1/(N^2+1)$ soit $\pm s \in \left\{ \frac{n-1}{2} - \ell , \ldots , \frac{n-1}{2} - k \right\}$.  

\begin{thm} \label{Tprinc}
Soit $k$ un entier compris entre $0$ et $\ell$ et soit $\Gamma \subset G$ un sous-groupe
de congruence. Alors, le spectre du laplacien sur les $k$-formes diff\'erentielles $L^2$ et
coferm\'es de $\Gamma \backslash \H^n$ est contenu dans l'ensemble 
$$\bigcup_{k \leq j \leq \ell} \left\{ \left( \frac{n-1}{2}-k \right)^2 - \left( \frac{n-1}{2} - j \right)^2 \right\}  \cup 
\left[ \left( \frac{n-1}{2} -k \right)^2 - \left( \frac12 - \frac{1}{N^2 +1} \right)^2 , + \infty \right[ .$$
En particulier, la conjecture $A^-$ de \cite{BC} est toujours v\'erif\'ee et pour tout $k \leq (n-4)/2$, 
la premi\`ere valeur propre non nulle $\lambda_1^k = \lambda_1^k (\Gamma)$ v\'erifie~:
$$\lambda_1^k \geq n-2k-2 .$$
\end{thm}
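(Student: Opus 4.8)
The strategy is to translate the spectral statement into a statement about infinitesimal characters of unitary representations of $G_{v_0} = \SO(n,1)$ occurring (weakly) in $L^2(\Gamma \backslash G)$, apply Théorème~\ref{C} together with Proposition~\ref{PA}, and then use the explicit dictionary between the Langlands parameters $J(\tau_k^\pm, s)$ and coclosed $k$-forms recalled in the paragraph preceding the statement. First I would recall (via Matsushima's formula, already invoked above) that a coclosed $L^2$ $k$-form which is an eigenform of the Laplacian with eigenvalue $\lambda$ corresponds to an irreducible unitary subrepresentation $\pi$ of $L^2(\Gamma \backslash G)$ isomorphic to some $J(\tau, s)$ with $\tau \in \{\tau_k, \tau_k^+, \tau_k^-\}$, and that the eigenvalue is given by the standard formula $\lambda = \left(\frac{n-1}{2} - k\right)^2 - s^2$ (this is where the parameter $s$ enters; here I am using that the Casimir acts on the induced representation $I(\tau, s)$ by a scalar expressible through $\lambda_\tau$ and $s$, and that on the $k$-form piece the contribution of $\lambda_\tau$ produces exactly the shift $\left(\frac{n-1}{2}-k\right)^2$ relative to $\left(\frac{n-1}{2}\right)^2$).

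\textbf{Main step.} Since $\pi$ occurs in $L^2(\Gamma \backslash G)$ for a congruence subgroup, Théorème~\ref{C} provides a generalized Arthur parameter $\psi$ with $\nu_\pi = \nu_\psi$, so Proposition~\ref{PA} applies to $\pi = J(\tau, s)$. It gives two cases. In case~(1), $|\mathrm{Re}(s)| < \frac12 - \frac{1}{N^2+1}$; since $\pi$ is unitary and $s \notin \frac12 \ZZ$ here, $s$ is real, so $s^2 < \left(\frac12 - \frac{1}{N^2+1}\right)^2$ and hence $\lambda > \left(\frac{n-1}{2}-k\right)^2 - \left(\frac12 - \frac{1}{N^2+1}\right)^2$, which lands in the half-line of the claimed set. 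In case~(2), $s \in \frac{n-1}{2} + \ZZ$; combined with unitarity of $J(\tau_k, s)$, which by the description of the complementary series recalled above forces $s \in i\RR$ or $|s| \le \frac{n-1}{2} - k$, the only possibilities with $s$ in that coset are $\pm s \in \left\{\frac{n-1}{2} - \ell, \dots, \frac{n-1}{2} - k\right\}$, i.e. $s = \frac{n-1}{2} - j$ for some integer $j$ with $k \le j \le \ell$. Then $\lambda = \left(\frac{n-1}{2}-k\right)^2 - \left(\frac{n-1}{2}-j\right)^2$, one of the discrete values in the union. (Discrete series representations, when $n$ is even and $k = n/2$, contribute $\lambda = 0$, i.e. the $j = k$ term; I would note this case separately, using that a discrete series parameter has $s = 0$ in the degenerate realization.) Taking the union over the three possible $\tau$'s and over all eigenforms gives the asserted containment of the spectrum.

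\textbf{Consequences.} For the conjecture $A^-$ of \cite{BC}: the claimed set has a positive gap above $0$ precisely when $k < \ell$, since then the smallest nonzero discrete value is $\left(\frac{n-1}{2}-k\right)^2 - \left(\frac{n-1}{2}-k-1\right)^2 = n - 2k - 2 > 0$, and the half-line starts at a positive number as soon as $\left(\frac{n-1}{2}-k\right)^2 > \left(\frac12\right)^2$, i.e. $k \le (n-4)/2$; so $\varepsilon(n,k)$ can be taken to be the minimum of these two quantities. For the final inequality: when $k \le (n-4)/2$ one checks $n - 2k - 2 \le \left(\frac{n-1}{2}-k\right)^2 - \left(\frac12 - \frac{1}{N^2+1}\right)^2$ (a one-line estimate since $\frac{n-1}{2}-k \ge \frac32$ makes the right side at least $\left(\frac32\right)^2 - \left(\frac12\right)^2 = 2 \ge n-2k-2$ when $k=(n-4)/2$, and the gap only grows as $k$ decreases), so $n-2k-2$ is a lower bound for the first nonzero eigenvalue.

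\textbf{Expected obstacle.} The genuinely substantive input is Théorème~\ref{C}, which rests on Arthur's classification and the destabilization carried out in the preceding sections; granting that, the only delicate bookkeeping is matching the normalization of $s$ in $J(\tau,s)$ with the Laplace eigenvalue and with the coordinate $\nu_\pi = (\lambda_\tau, s)$ used in Proposition~\ref{PA} — in particular making sure the shift by $\lambda_{\tau_k}$ versus $\lambda_{\tau_k^\pm}$ is handled consistently so that the degree-$k$ piece really produces the factor $\left(\frac{n-1}{2}-k\right)^2$ uniformly — and correctly isolating the discrete-series contributions. I expect that bookkeeping, rather than any new idea, to be where care is needed.
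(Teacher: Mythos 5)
Your proposal is correct and follows the same route as the paper's own discussion in the paragraph preceding the theorem: via Matsushima one matches coclosed $L^2$ $k$-form eigenvalues with parameters $J(\tau_k^{\pm},s)$ and the eigenvalue $\bigl(\tfrac{n-1}{2}-k\bigr)^2 - s^2$, then Théorème~\ref{C}, Proposition~\ref{PA}, and the known unitarity interval for $J(\tau_k,s)$ quantize $s$. One small slip worth noting: in case~(1), unitarity together with $s\notin\tfrac12\ZZ$ does not force $s$ to be real (it could be purely imaginary), but in that case $s^2\le 0$ and the bound $\lambda > \bigl(\tfrac{n-1}{2}-k\bigr)^2 - \bigl(\tfrac12-\tfrac1{N^2+1}\bigr)^2$ holds a fortiori, so the conclusion is unaffected.
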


\subsection{}
Revenons au cas g\'en\'eral o\`u $G_{v_0} = \SO (p,q)$. Notons $\theta$ l'involution de Cartan associ\'ee
au choix de ${\rm S} ({\rm O} (p) \times {\rm O} (q))$ comme sous-groupe compact maximal $K$ dans $G_{v_0}$. Nous nous int\'eressons \`a l'isolation des rep\'esentations unitaires $\sigma \in 
\widehat{G}_{v_0}$ dont la $(\mathfrak{g}_{\CC} , K)$-cohomologie 
$$H^{\bullet} (\mathfrak{g}_{\CC} , K ; \sigma ) = \bigoplus_k H^k (\mathfrak{g}_{\CC} , K ; \sigma )$$
est non nulle.

Les param\`etres locaux $\tilde{\psi}_{v_0} : W_{\RR} \times \SL (2, \CC) \rightarrow {}^L \- G$ associ\'es aux repr\'esentations cohomologiques sont d\'ecrits par Arthur \cite{ArthurOU}. Rappelons d'abord que les 
repr\'esentations cohomologiques sont classifi\'ees par Vogan et Zuckerman \cite{VoganZuckerman},
ce sont les modules $A_{\mathfrak{q}}$ associ\'ees \`a des sous-alg\`ebres paraboliques $\theta$-stables 
$\mathfrak{q} = \mathfrak{l} + \mathfrak{u} \subset \mathfrak{g}_{\CC}$. 
Notons $L$ le sous-groupe de Levi correspondant \`a $\mathfrak{l}$. Alors $L$ est d\'efini sur $\RR$. On peut identifier
son groupe dual $\widehat{L}$ au sous-groupe de Levi correspondant dans $\widehat{G}$.
D'apr\`es Shelstad \cite{Shelstad}, l'injection 
$\widehat{L} \subset \widehat{G}$ s'\'etend en un plongement canonique 
$\xi_L : {}^L \- L \rightarrow {}^L \- G$ de $L$-groupes.
Notons $\psi_L : W_{\RR} \times \SL (2 , \CC) \rightarrow {}^L \- L$ le param\`etre 
qui est trivial en restriction \`a $W_{\RR}$ et envoie l'\'el\'ement 
$$\left( 
\begin{array}{cc}
1 & 1 \\
0 & 1 
\end{array} \right) \in \SL (2 , \CC)$$
sur l'\'el\'ement unipotent principal dans $\widehat{L}$. Le param\`etre $\psi_L$ 
correspond \`a la repr\'esentation triviale de $L$ et le param\`etre compos\'e $\xi_L \circ \psi_L$
est le param\`etre local associ\'e \`a la repr\'esentation
cohomologique correspondant \`a la sous-alg\`ebre parabolique $\mathfrak{q}$.

\begin{thm} \label{Topq}
Soit $\sigma = A_{\mathfrak{q}}$ une repr\'esentation cohomologique de $G_{v_0}= \SO (p,q)$ telle que 
le centre
de $L$ soit compact.
Alors $\sigma$ est isol\'ee de
$$\left\{ \pi \in \widehat{G}_{v_0} \; : \;  \mbox{ il existe } \Pi \mbox{ repr\'esentation automorphe de } G \mbox{ telle que } \Pi_{v_0} \cong \pi \right\}$$
dans $\widehat{G}_{v_0}$ -- le dual unitaire -- muni de sa topologie de Fell. 
C'est en particulier toujours le cas lorsque 
${\rm rang}_{\CC} (G) = {\rm rang}_{\CC} (K)$, {\it i.e.} lorsque $pq$ est pair.
\end{thm}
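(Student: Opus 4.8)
The plan is to reduce the isolation statement to a statement about infinitesimal characters, and then invoke Théorème \ref{C}. The starting point is that, by the description of Vogan–Zuckerman, the cohomological representation $\sigma = A_{\mathfrak{q}}$ has a fixed infinitesimal character $\nu_{\sigma} \in \mathfrak{t}_{\CC}^{*}/W$; it coincides with the infinitesimal character of the trivial representation of $G_{v_0}$ (equivalently, $\nu_{\sigma} = \rho$ in suitable coordinates, where $\rho$ is the half-sum of positive roots), since $A_{\mathfrak{q}}$ has the same infinitesimal character as the trivial module. First I would record that, by Théorème \ref{C}, any $\pi \in \widehat{G}_{v_0}$ that occurs weakly in some $L^2(\Gamma\backslash G)$ has infinitesimal character $\nu_{\pi} = \nu_{\psi}$ for some generalized Arthur parameter $\psi$. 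So it suffices to show two things: (i) among the $\pi$ with $\nu_{\pi}$ of the form $\nu_{\psi}$, the representation $\sigma$ is the unique one with this particular infinitesimal character that is \emph{close} to $\sigma$ in the Fell topology, and (ii) no sequence $\pi_n \to \sigma$ with $\pi_n \neq \sigma$ can have all $\nu_{\pi_n}$ of the form $\nu_{\psi_n}$ while $\nu_{\pi_n} \to \nu_{\sigma}$.

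The key step is the following rigidity phenomenon for generalized Arthur parameters. Writing $\psi = \bigoplus_j \varphi_j \otimes r_j$ with characters $\chi = z^p\overline z^{\,q}$ appearing in $\varphi_j$ subject to $p-q\in\ZZ$ and $|\mathrm{Re}(p+q)| < 1 - 2/(m_j^2+1) < 1$, the associated infinitesimal character $\nu_{\psi} = (P_1,\dots,P_\ell)$ has entries $P_i$ whose real parts lie in a bounded region but, crucially, whose \emph{imaginary parts are constrained}: the unitarity-type bound together with $p - q \in \ZZ$ forces each coordinate of $\nu_{\psi}$ to lie in a discrete union of vertical lines $\frac12\ZZ + i\RR$ up to $W$, with the segments of allowed real part having length $< \frac12$ around each half-integer (outside the finitely many "integral" points coming from the $\SL_2$-factors). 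Hence the set of possible $\nu_{\psi}$, intersected with any small neighbourhood of the fixed point $\nu_{\sigma}$, is a \emph{discrete} set — in fact reduces to $\{\nu_{\sigma}\}$ once the neighbourhood is small enough, because $\nu_{\sigma}$ is an integral (regular, $\rho$-type) point and no nearby non-equal $\nu_{\psi}$ can approach it (the allowed real-part segments around $\nu_{\sigma}$ degenerate to the single integral point). This is where I would cite the computations of \cite[\S 6.3 \& 6.4]{BC} and the proof technique of Proposition \ref{PA}, adapted to the $\SO(p,q)$ setting with the hypothesis that the center of $L$ is compact — that hypothesis is precisely what guarantees $\sigma$ has regular enough (integral) infinitesimal character and that the cohomological parameter $\xi_L\circ\psi_L$ is "isolated" among Arthur parameters.

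Next I would combine this with the standard fact that the Fell topology on $\widehat{G}_{v_0}$ is finer than (dominates) convergence of infinitesimal characters: if $\pi_n \to \pi$ in the Fell topology then $\nu_{\pi_n} \to \nu_{\pi}$ (this follows from continuity of the action of the center $\mathcal Z$ of the enveloping algebra on matrix coefficients, i.e. from the multiplier machinery of \cite[\S 20]{Arthur} / \eqref{Amult}). Therefore any sequence of automorphic $\pi_n$ converging to $\sigma$ has $\nu_{\pi_n} = \nu_{\psi_n} \to \nu_{\sigma}$; by the rigidity step, $\nu_{\psi_n} = \nu_{\sigma}$ for $n$ large; and then, among the finitely many irreducible unitary representations with infinitesimal character $\nu_{\sigma}$ (finiteness by Harish-Chandra), only finitely many can be Fell-limits of a sequence, and a separation argument (the representations with a given infinitesimal character form a finite, hence discrete, set in the Fell topology restricted to that infinitesimal character — more precisely one uses that $A_{\mathfrak{q}}$ is not a limit of the others sharing its infinitesimal character, by their distinct $K$-types) shows $\pi_n = \sigma$ eventually. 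Finally, the parenthetical assertion "$pq$ pair $\Rightarrow$ equal rank $\Rightarrow$ always the case" follows because when $\mathrm{rang}_{\CC}(G) = \mathrm{rang}_{\CC}(K)$ every cohomological $A_{\mathfrak{q}}$ automatically has compact center of $L$ (the relevant $\theta$-stable Levi is then anisotropic modulo center), so the hypothesis is vacuous.

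\textbf{Main obstacle.} The hard part is the rigidity step: making precise and proving that the set of infinitesimal characters of the form $\nu_{\psi}$ — as $\psi$ ranges over \emph{all} generalized Arthur parameters for $\widehat{G}$, including those with several $\SL_2$-blocks and several cuspidal constituents of varying ranks $m_j$ — has $\nu_{\sigma}$ as an isolated point. One must control the interplay between the non-tempered (integral) contributions of the $r_j$ factors and the nearly-tempered contributions of the $\varphi_j$, and check that the Luo–Rudnick–Sarnak-type bound $|\mathrm{Re}(p+q)| < 1 - 2/(m_j^2+1)$, combined with self-duality and $p-q \in \ZZ$, genuinely prevents accumulation at $\nu_{\sigma}$; this is essentially a more elaborate version of the case analysis in the proof of Proposition \ref{PA} and of \cite[pp. 78--83]{BC}, and keeping track of all the block configurations is the delicate point.
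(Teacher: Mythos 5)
Your reduction to infinitesimal characters via Théorème \ref{C} and the Fell-topology continuity of infinitesimal characters is the right first half of the argument, but the ``rigidity step'' you identify as the main obstacle is not merely delicate: as stated, it is false, and the paper's proof is structured precisely to avoid it. The set of infinitesimal characters $\nu_{\psi}$ of generalized Arthur parameters does \emph{not} have $\nu_{\sigma}=\rho$ as an isolated point. Here is a concrete counterexample for $G_{v_0}=\SO(5,1)$ (so $\ell=3$, $N=6$, $\rho=(2,1,0)$): take $\psi$ to be the direct sum of the six characters $(z/\overline z)^{\pm 2}$, $(z/\overline z)^{\pm 1}$, $(z\overline z)^{\pm it}$, viewed as a single $\varphi_1$ with $m_1=6$ and $n_1=1$. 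All constraints of a generalized Arthur parameter are satisfied (each $|\mathrm{Re}(p+q)|=0$), and $\nu_{\psi}=(2,1,it)\to\rho$ as $t\to 0$. The imaginary direction is simply not controlled by the Luo--Rudnick--Sarnak bound; whenever $\rho$ has a coordinate equal to $0$ (e.g.\ $\SO(2k+1,1)$, type $D$) a purely imaginary perturbation of that coordinate produces $\nu_{\psi}$'s accumulating at $\rho$, and more generally tempered parameters produce a whole continuum of $\nu_{\psi}$'s whose closure contains many integral points.

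Your argument therefore cannot close: Fell-convergence gives $\nu_{\pi_n}\to\nu_{\sigma}$ and Théorème \ref{C} gives $\nu_{\pi_n}=\nu_{\psi_n}$, but these two facts together do \emph{not} force $\nu_{\pi_n}=\nu_{\sigma}$ eventually. The missing ingredient, which is the heart of the paper's proof, is Vogan's result (invoked via \cite[Remarque (5) p.~217]{JIMJ} and \cite{Vogan}) that unitary irreducible representations that approach a cohomological $A_{\mathfrak q}$ in the Fell topology are themselves cohomological inductions from a $\theta$-stable parabolic. This restricts drastically the class of $\pi$'s one must consider: their Arthur parameters are of the very special form $\bigoplus_i\psi_i^s$ built from the factors $L_i$ of the Levi $L$, with a one-parameter deformation in $s$. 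It is only after this restriction that the bound $|\mathrm{Re}(s)|<1/2$ forces the deformation parameter to stay away from the value ($s=(n-1)/2$ or $n/2$) corresponding to the trivial representation of $L_i$ --- except when $L_i\cong\SO(1,1)$, i.e.\ except when the center of $L$ is non-compact. Without Vogan's theorem, the ``tempered'' directions of accumulation exhibited above are not excluded, and the proof collapses. (The parenthetical remark about $pq$ even is fine: in the equal-rank case the Cartan, hence the center of every $\theta$-stable Levi, is compact.)
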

\begin{proof}[D\'emonstration] Soit $L$ le sous-groupe de Levi associ\'e \`a une certaine sous-alg\`ebre
parabolique $\mathfrak{q}$. Le groupe $L$ est donn\'e naturellement par des matrices diagonales par blocs (cf. 
la description du cas unitaire dans \cite[\S 5.2]{BC}). Il se d\'ecompose ainsi en facteurs $L_i$, 
simples ou ab\'eliens. Il en est naturellement de m\^eme pour le
groupe dual. Le param\`etre $\psi_L$ se d\'ecompose donc en une somme directe de param\`etre
$\psi_{L_i}$. 

Si $L_i$ est un facteur simple localement isomorphe \`a $\SO (n,1)$ ou $\SU(n,1)$ $(n \geq 1)$
alors la repr\'esentation
triviale de $L_i$ n'est pas isol\'ee dans le dual unitaire de $L_i$~: si $L_i$ est localement isomorphe \`a $\SO (1,1)$ il suffit de consid\'erer une suite de caract\`eres unitaires
convergeant vers le caract\`ere trivial, sinon $1_{L_i}$ est limite de repr\'esentations de la 
s\'erie compl\'ementaire unitaire de $\SO (n,1)$ ($n \geq 2$) ou $\SU (n,1)$ ($n\geq 1$). Dans tous les cas notons $\psi_{i}^{s} :  \CC^{\times} \times \SL (2 , \CC)  \rightarrow  {}^L \- L_i$ le param\`etre
$$\left[ \left(
\begin{array}{cc}
(z/\overline{z})^s & \\
& (z/\overline{z})^{-s} 
\end{array} \right) \otimes 1\right] \oplus \left[ 1 \otimes r_{n-1} \right] ,$$
o\`u $r_{n-1}$ d\'esigne la repr\'esentation irr\'eductible de $\SL (2, \CC)$ qui est de
dimension maximale dans le groupe dual de $\SO (n-1)$, resp. $\SU (n-1)$. Lorsque $s = (n-1)/2$,
dans le cas orthogonal, et $s = n/2$, dans le cas unitaire, le caract\`ere infinit\'esimal de ce param\`etre est celui de la repr\'esentation triviale de $L_i$. 

Pour qu'un param\`etre $\psi_i^s$ soit un param\`etre d'Arthur g\'en\'eralis\'e il est n\'ecessaire que 
$|{\rm Re} (s) |< 1/2$. Les caract\`eres infinit\'esimaux de tels param\`etres ne peuvent donc s'approcher
de celui de la repr\'esentation triviale que si $n=1$, c'est-\`a-dire si $L_i$ est localement isomorphe 
\`a $\SO (1,1)$. Dans ce cas le centre de $L$ n'est pas compact.  

Comme remarqu\'e dans \cite[Remarque (5) p. 217]{JIMJ}, il d\'ecoule de
\cite{Vogan} que les repr\'esentations irr\'eductibles unitaires qui s'approchent  de repr\'esentations
cohomologiques de $G_{v_0}$ sont des induites cohomologiques. D'apr\`es le th\'eor\`eme \ref{C},
si une telle repr\'esentation est la composante locale d'une repr\'esentation automorphe, son 
caract\`ere infinit\'esimal est \'egal \`a celui d'un param\`etre $\bigoplus_i \psi_i^s$ comme ci-dessus.
Cela force les repr\'esentations \`a \^etre induites d'un sous-groupe de Levi $L$ \`a centre
non compact. Auquel cas la repr\'esentation cohomologique limite est elle-m\^eme associ\'ee \`a 
une sous-alg\`ebre parabolique dont le Levi a un centre non compact.
\end{proof}

\bibliography{bibli}

\bibliographystyle{smfplain}

\end{document}